\DeclareMathAlphabet{\mathpzc}{OT1}{pzc}{m}{it}
\newtheorem{thm}{Theorem}[section]
\newtheorem{lem}[thm]{Lemma}
\newtheorem{prop}[thm]{Proposition}
\newtheorem{prob}[thm]{Problem}
\newtheorem{cor}[thm]{Corollary}
\newdefinition{defn}[thm]{Definition}
\newdefinition{ex}[thm]{Example}
\newdefinition{rem}[thm]{Remark}
\newdefinition{note}{Note}
\newcommand{\comment}[1]{}
\newcommand{\A}[1]{\mathbb{A}^{#1}}
\newcommand{\KerD}{\text{Ker}(D)}
\newcommand{\Der}[2]{\text{Der}_{#1}(#2)}
\newcommand{\Hom}[3]{\text{Hom}_{#1}(#2, #3)}
\newcommand{\Ker}[1]{\text{Ker}(#1)}
\newcommand{\nil}[1]{\text{nil}(#1)}
\newcommand{\Sym}[2]{\text{Sym}_{#1}(#2)}
\newcommand{\ol}[1] {\overline{#1}}
\newcommand{\ul}[1] {\underline{#1}}
\newcommand{\Spec}[1]{\text{Spec}(#1)}
\newcommand{\mbbQ}{\mathbb{Q}}
\newcommand{\mbbC}{\mathbb{C}}
\newcommand{\mbbN}{\mathbb{N}}
\newcommand{\trdeg}[2]{\text{tr.deg}_{#1}(#2)}
\begin{document}
\begin{frontmatter}
	\title{Structure of $\mathbb{A}^2$-fibrations having fixed point free locally nilpotent derivations}
		
	\author{Janaki Raman Babu}
	\address{Department of Mathematics, Indian Institute of Space Science and Technology, \\
		Valiamala P.O., Trivandrum 695 547, India\\
		email: \texttt{raman.janaki93@gmail.com, janakiramanb.16@res.iist.ac.in}}
	
	\author{Prosenjit Das\footnote{Corresponding author.}}
	\address{Department of Mathematics, Indian Institute of Space Science and Technology, \\
		Valiamala P.O., Trivandrum 695 547, India\\
		email: \texttt{prosenjit.das@gmail.com, prosenjit.das@iist.ac.in}}		
\begin{abstract}
	In this article we show that a fixed point free locally nilpotent derivation of an  $\mathbb{A}^2$-fibration over a Noetherian ring containing $\mathbb{Q}$ has slice.\\
	{\tiny Keywords: Affine fibration; Polynomial algebra; Residual variable; Locally nilpotent derivation; Fixed point free; Slice.} \\ 
	{\tiny {\bf AMS Subject classifications (2010)}. Primary 14R25; Secondary 13B25, 13N15}
	\end{abstract}
	\end{frontmatter}
	
	\section{Introduction} \label{Sec_Intro}
	Throughout this article rings will be commutative with unity. Let $R$ be a ring. For a prime ideal $P$ of $R$, let $k(P)$ denote the \textit{residue field} $R_P/PR_P$. The \textit{polynomial ring} in $n$ variables over $R$ is denoted by $R^{[n]}$. Let $A$ be an $R$-algebra. We shall use the notation $A = R^{[n]}$ to mean that $A$ is isomorphic, as an $R$-algebra, to a polynomial ring in $n$ variables over $R$. $A$ is called an \textit{$\A{n}$-fibration} or \textit{affine $n$-fibration} over $R$, if $A$ is finitely generated and flat over $R$, and $A \otimes_R k(P) = k(P)^{[n]}$ for all $P \in \Spec{R}$. $A$ is called a \textit{stably polynomial} algebra over $R$, if $A^{[m]} = R^{[n]}$ for some $m,n \in \mbbN$. Let $D: A \longrightarrow A$ be an $R$-derivation. $D$ is called \textit{irreducible}, if there does not exists $\alpha \in A\backslash A^*$ such that $D(A) \subseteq \alpha A$. $D$ is defined to be \textit{fixed point free} if $D(A)A = A$. $D$ is said to have a \textit{slice} $s \in A$, if $D(s) =1$. $D$ is called a \textit{locally nilpotent $R$-derivation} ($R$-LND), if for each $x \in A$, there exists $n \in \mbbN$ such that $D^n(x) =0$. Suppose that $\mathbb{Q} \hookrightarrow R$ and $D: A \longrightarrow A$ an $R$-LND. Then it is well known (\textit{slice theorem}) that $A = \KerD[s] = \KerD^{[1]}$, if $D$ has a slice $s \in A$; and the converse holds when $D$ is irreducible (see \cite[Proposition 2.1]{Wright_On-The-Jacobian-Conjecture}).
	
	\medskip
	
	A problem in affine algebraic geometry asks the following.
	
	\begin{prob} \label{Prob_A2_FPF-LND}
	Let $R$ be a ring containing $\mathbb{Q}$, $A$ an $\A{2}$-fibration over $R$, and $D: A \longrightarrow A$ a fixed point free $R$-LND. Does $D$ have a slice? 
	\end{prob}

The answer to Problem \ref{Prob_A2_FPF-LND} is affirmative for the case the $\A{2}$-fibration $A$ is trivial, i.e., $A = R^{[2]}$. When $R$ is a field, the result follows from the work of Rentschler (\cite{Rentchler_Operations-du-Groupe}), whereas the case $R$ is a UFD is proved by Daigle-Freudenburg in  \cite{Daigle-Freudenburg_UFD-LND-Rank-2}. Bhatwadekar-Dutta, in \cite{BD_LND}, established the result when $R$ is a Noetherian domain. The case $R$ is a general ring, under the assumption the LND has divergence  zero, was done by Berson-van den Essen-Maubach in \cite{Essen-Maubach-Berson_Der-div-zero}; and the most general case, which is stated below, was proved by van den Essen in \cite{Essen_Around-Cancellation}.

	\begin{thm} \label{Essen_FixdPtLND-R^[2]-trivKer}
	Let $R$ be a ring containing $\mathbb{Q}$ and $D$ a fixed point free $R$-LND of $A = R^{[2]}$. Then, $\KerD = R^{[1]}$ and $D$ has a slice, i.e., $A = \KerD^{[1]}$.
\end{thm}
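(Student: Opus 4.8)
\smallskip
\noindent\emph{Proof idea.}
The plan is to reduce Theorem~\ref{Essen_FixdPtLND-R^[2]-trivKer} to the divergence‑zero case of Berson--van den Essen--Maubach \cite{Essen-Maubach-Berson_Der-div-zero}. The key observation is that a fixed point free $R$-LND of $R^{[2]}$ automatically has divergence lying in the nilradical; so, up to the harmless presence of nilpotents in $R$, one is already in the situation treated in \cite{Essen-Maubach-Berson_Der-div-zero}.

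To make this precise, first fix $R$-coordinates $A=R[x,y]$, write $D=f\,\partial_x+g\,\partial_y$ with $f=D(x)$, $g=D(y)$, and put $\mathrm{div}(D):=\partial_x f+\partial_y g\in A$. For every $P\in\Spec{R}$ the base change $D\otimes_R k(P)$ is a locally nilpotent $k(P)$-derivation of $A\otimes_R k(P)=k(P)^{[2]}$, and by Rentschler's structure theorem \cite{Rentchler_Operations-du-Groupe} any such derivation is, in suitable variables $u,v$ over $k(P)$, of the form $a(u)\,\partial_v$, hence has divergence $0$ (divergence being coordinate independent over a field). Since divergence commutes with base change, $\mathrm{div}(D)$ maps to $0$ in $k(P)^{[2]}$ for every $P$; that is, each coefficient of $\mathrm{div}(D)$ lies in $\bigcap_{P\in\Spec{R}}P=\nil{R}$, and therefore $\mathrm{div}(D)\in\nil{R}\cdot A=\nil{A}$. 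In particular, when $R$ is reduced, $\mathrm{div}(D)=0$, and \cite{Essen-Maubach-Berson_Der-div-zero} directly yields $\Ker{D}=R^{[1]}$ together with a slice, i.e.\ $A=\Ker{D}^{[1]}$.

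For general $R$ one then has to descend the nilpotents. Passing to $\bar R:=R_{\mathrm{red}}$, $\bar A:=\bar R^{[2]}$ and the induced (still fixed point free, now divergence‑zero) LND $\bar D$, the reduced case furnishes $\bar t,\bar s\in\bar A$ with $\bar A=\bar R[\bar t,\bar s]$, $\bar D\bar t=0$, $\bar D\bar s=1$. It remains to lift this picture to $R$: reducing first by a direct‑limit argument over finitely generated $\mathbb{Q}$-subalgebras of $R$, and then by d\'evissage along powers of the nilradical, to the case of a square‑zero kernel, one lifts the slice $\bar s$ (and the variable $\bar t$) to elements $s,t\in A$ with $Dt=0$, $Ds=1$ and $A=R[t,s]$, whence $\Ker{D}=R[t]=R^{[1]}$ and $A=\Ker{D}^{[1]}$. (Note that a fixed point free $D$ is irreducible, so this is consistent with, and could alternatively be completed via, the slice theorem recalled above.)

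The divergence computation is the easy part; the real work is the descent step --- transferring the divergence‑zero theorem across the nilpotent thickening $R\twoheadrightarrow R_{\mathrm{red}}$. Concretely, one must show that the affine‑linear equation $D(s)=1$, solvable modulo $\nil{A}$, is solvable in $A$ (equivalently, that a coordinate system of $A$ adapted to $\bar D$ lifts), and I expect this --- rather than anything specifically two‑dimensional --- to be the main obstacle; a viable alternative is simply to verify that the proof of \cite{Essen-Maubach-Berson_Der-div-zero} goes through unchanged under the weaker hypothesis $\mathrm{div}(D)\in\nil{A}$.
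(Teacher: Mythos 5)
First, note that the paper does not prove Theorem~\ref{Essen_FixdPtLND-R^[2]-trivKer} at all: it is quoted as a known result of van den Essen \cite{Essen_Around-Cancellation}, so there is no in-paper argument to compare yours against. Judged on its own terms, the first half of your proposal is correct and clean: the divergence of any $R$-LND of $R[x,y]$ (fixed point free or not) specializes, for each $P\in\Spec{R}$, to the divergence of an LND of $k(P)^{[2]}$, which vanishes by Rentschler's theorem and the coordinate-invariance of divergence over a field; hence $\mathrm{div}(D)\in\nil{R}\cdot A$, and for reduced $R$ you land exactly in the divergence-zero theorem of Berson--van den Essen--Maubach. For reduced $R$ this is a complete proof modulo that citation.

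The gap is the one you yourself flag: the descent across nilpotents is asserted, not carried out, and it is not a formality. After reducing to a finitely generated $\mathbb{Q}$-subalgebra (so that $\eta=\nil{R}$ is nilpotent) and then to a square-zero step $\eta^2=0$, lifting a slice $\bar s$ of $\bar D$ only gives $D(s)=1+e$ with $e\in\eta A$; the phrase ``one lifts the slice'' conceals the actual content, namely that $e\in D(A)$. This does hold, but for a reason you must supply: since $\bar D$ has a slice it is surjective on $\bar A$, and because $\eta\cdot\eta A=0$ the restriction of $D$ to $\eta A\cong\eta\otimes_{\bar R}\bar A$ is $1\otimes\bar D$, hence surjective onto $\eta A$; so $s$ can be corrected to a true slice $s-c$ with $D(c)=e$. (Equivalently, one may invoke the known but nontrivial lemma that a locally nilpotent derivation $D$ of a $\mathbb{Q}$-algebra with $D(b)\in A^{*}$ for some $b$ already has a slice.) A further argument --- e.g.\ the Dixmier retraction $\sum_{k}(-s)^{k}D^{k}/k!$ applied to a lift of $\bar t$, or Theorem~\ref{Hamann} once $A=\Ker{D}^{[1]}$ is known over the Noetherian subring --- is then needed to identify $\Ker{D}$ with $R^{[1]}$, in the spirit of the nilpotent-descent steps the paper itself performs in Lemma~\ref{Lem_n-generated_modulo-nilR}, Lemma~\ref{Lem_SinglyGeneratedAlg-PolyAlg} and Proposition~\ref{Kahoui_fpf-LND_A1-fib}. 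With these points made explicit your outline closes; without them the non-reduced case is not yet a proof.
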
 

When the $\A{2}$-fibration $A$ is not known to be non-trivial over $R$, Problem \ref{Prob_A2_FPF-LND} has affirmative answers due to the results of Kahoui-Ouali (\cite{Kahoui_Fixd-pt-fr}, \cite{Kahoui_A2-fib_triviality-criterion}) under the assumption that $A$ is stably polynomial over $R$. In \cite{Kahoui_Fixd-pt-fr}, they established that if $R$ is normal, $A$ is stably polynomial over $R$ and $\KerD$ is an $\A{1}$-fibration over $R$, then $D$ has a slice, i.e., in particular, it follows that if ``$R$ is a Noetherian UFD and $A$ is a stably polynomial algebra over $R$'' (or simply, ``$R$ is a regular domain''), then $D$ has a slice \footnote{In this context, the following needs a mention. Freudenburg, in \cite{Freudenburg_Rxyz-slice}, showed that (\cite[Theorem 3.1]{Freudenburg_Rxyz-slice}) if $A$ is an $\A{2}$-fibration over $R = k^{[n]}$ where $k$ is a field containing $\mathbb{Q}$ and $n \ge 1$, then $A = R^{[2]}$ if and only if $A$ has an $R$-LND with slice (also see \cite[Corollary 2.3]{Daigle-Freudenburg_FamiliesAffFib}); and thereby raised a question (\cite[Question 2, pg. 3084]{Freudenburg_Rxyz-slice}) asking whether the same conclusion holds if the hypothesis ``$A$ has an $R$-LND with slice'' is replaced by ``$A$ has a fixed point free $R$-LND''. Kahoui-Ouali, in \cite{Kahoui_Fixd-pt-fr}, settled this question affirmatively. However, the above problem by Freudenburg essentially brought attention towards the question ``whether fixed point free LNDs of $\mathbb{A}^2$-fibrations are having slice''.}. Recently, in \cite{Kahoui_A2-fib_triviality-criterion}, they proved that the conclusion holds in more generality, specifically (\cite[Thorem 3.1, Theorem 2.4 \& Corollary 3.2]{Kahoui_A2-fib_triviality-criterion}),

\begin{thm} \label{Kahoui-A2-fib_trivial_criterion}
	Let $R$ be a ring containing $\mbbQ$, $A$ an $\A{2}$-fibration over $R$ and $D: A \longrightarrow A$ a fixed point free $R$-LND. If $A$ is stably polynomial over $R$, then $\KerD = R^{[1]}$ and $D$ has a slice, i.e., $A = \KerD^{[1]}$. Further, if $R$ is Noetherian and $A$ is locally stably polynomial over $R$, then $\KerD = \Sym{R}{N}$ for some rank one projective $R$-module $N$ and $A = \KerD^{[1]}$, i.e., $D$ has a slice.
\end{thm}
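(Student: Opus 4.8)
The strategy is to reduce the statement to the known trivial case (Theorem~\ref{Essen_FixdPtLND-R^[2]-trivKer}) and to the earlier result of Kahoui--Ouali in \cite{Kahoui_Fixd-pt-fr}, by analysing $B := \KerD$. Assume first that $A^{[m]} = R^{[m+2]}$. The plan is to proceed in four steps: (i) show that $B$ is an $\A{1}$-fibration over $R$; (ii) deduce that $A$ is an $\A{1}$-fibration over $B$ and that $D$, regarded as a $B$-LND of $A$, is fixed point free with kernel $B$; (iii) apply a suitable strengthening of the result of \cite{Kahoui_Fixd-pt-fr} to obtain a slice $s \in A$, so that by the slice theorem $A = B[s] = \KerD^{[1]}$; and (iv) conclude from the stably-polynomial hypothesis that $B = R^{[1]}$.

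For step~(i), first base change along the residue maps: for $P \in \Spec{R}$, $\ol D := D \otimes_R k(P)$ is a locally nilpotent $k(P)$-derivation of $\ol A := A \otimes_R k(P) = k(P)^{[2]}$, and it is fixed point free because $D(A)A = A$; hence, by Theorem~\ref{Essen_FixdPtLND-R^[2]-trivKer} (over the field $k(P)$ this is Rentschler's theorem), $\Ker{\ol D} = k(P)^{[1]}$ and $\ol D$ has a slice. The task is to globalise this: to show that $B$ is a finitely generated, $R$-flat algebra with $B \otimes_R k(P) = \Ker{\ol D} = k(P)^{[1]}$ for every $P$. Since formation of kernels of derivations is not compatible with base change in general, this is precisely where the stably-polynomial hypothesis must be exploited. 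One natural device is to extend $D$ to the fixed point free $R$-LND $E$ of $A^{[m]} = R^{[m+2]}$ that kills the $m$ adjoined variables; then $E$ is locally nilpotent, fixed point free, and $\Ker{E} = B^{[m]}$, so it suffices to control the kernel of a locally nilpotent derivation of a polynomial ring. Combining the residue-field computation with finiteness results for kernels of locally nilpotent derivations in low relative dimension and with the fact that stably polynomial algebras are locally polynomial (Bass--Connell--Wright), one expects to conclude that $B$ is an $\A{1}$-fibration over $R$. This is the main obstacle of the whole argument.

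Granting step~(i), step~(ii) is straightforward: for $Q \in \Spec{B}$ the fibre $A \otimes_B k(Q)$ is a one-dimensional affine $k(Q)$-algebra which, using the slice of $\ol D$ over the residue field of $R$ lying below $Q$, is identified with $k(Q)^{[1]}$, so $A$ is an $\A{1}$-fibration over $B$; and because $B$ is factorially closed enough in $A$, the $B$-kernel of $D$ is again $B$. Step~(iii) is then the theorem of \cite{Kahoui_Fixd-pt-fr}; its normality hypothesis on $R$ is removed by extending the local-slice argument used there to the present, not-necessarily-normal, $\A{1}$-fibration $B$. For step~(iv), $A = B[s]$ gives $B \cong A/sA$, so $B$ is a finitely generated, finitely presented (it is a retract of $R^{[m+2]}$), $R$-flat algebra with $B^{[m+1]} = A^{[m]} = R^{[m+2]}$; thus $B$ is a stably polynomial $\A{1}$-fibration over $R$, and by Bass--Connell--Wright $B = \Sym{R}{N}$ for a finitely generated rank-one projective $R$-module $N$. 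The $R$-algebra isomorphism $\Sym{R}{N \oplus R^{m+1}} = B^{[m+1]} = R^{[m+2]} = \Sym{R}{R^{m+2}}$ forces $N \oplus R^{m+1} \cong R^{m+2}$ (symmetric algebras of finitely generated projective modules over $R$ determine the module up to isomorphism); hence $N$ is stably free of rank one, so $N \cong \bigwedge^{m+2}(N \oplus R^{m+1}) \cong R$ and $B = R^{[1]}$.

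For the second assertion, let $R$ be Noetherian and $A$ locally stably polynomial. Given $P \in \Spec{R}$, pick $f \notin P$ with $A_f$ stably polynomial over $R_f$; then $A_P$ is stably polynomial over $R_P$ and $D_P := D \otimes_R R_P$ is a fixed point free $R_P$-LND of the $\A{2}$-fibration $A_P$, so by the first assertion $\Ker{D_P} = R_P^{[1]}$ and $D_P$ has a slice. Since localisation is flat, $B \otimes_R R_P = \Ker{D_P} = R_P^{[1]}$ for all $P$; this shows $B$ is $R$-flat, and a spreading-out argument from these local descriptions shows $B$ is finitely generated over the Noetherian ring $R$, so $B$ is an $\A{1}$-fibration over $R$ and hence $B = \Sym{R}{N}$ for a rank-one projective $R$-module $N$ by Bass--Connell--Wright. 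Finally, $D$ has a slice if and only if its plinth ideal $\bkXY := D(A) \cap B$ equals $B$; the formation of $\bkXY$ commutes with localisation, i.e. $\bkXY \otimes_R R_P = D_P(A_P) \cap B_P$, and $D_P$ has a slice for every $P$, so $\bkXY_P = B_P$ for all $P$, whence $\bkXY = B$ and $D$ has a slice $s$; by the slice theorem $A = B[s] = \KerD^{[1]}$. As indicated above, the crux of the whole argument is step~(i), namely showing that the kernel of the fixed point free locally nilpotent derivation is an $\A{1}$-fibration over $R$ --- equivalently, that it behaves well under base change --- using only that $A$ is stably polynomial.
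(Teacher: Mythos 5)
Your outline correctly identifies where the difficulty lies, but it does not resolve it. Step (i) --- that $B = \KerD$ is an $\A{1}$-fibration over $R$ --- is exactly the content that has to be proved, and your argument for it ends with ``one expects to conclude''. Passing to residue fields gives $\Ker{\ol{D}} = k(P)^{[1]}$, but since kernel formation does not commute with base change this says nothing yet about $B \otimes_R k(P)$, and neither the extension to $E$ on $A^{[m]} = R^{[m+2]}$ nor the appeal to unspecified ``finiteness results for kernels in low relative dimension'' closes the gap. Step (iii) has the same defect: you invoke the theorem of \cite{Kahoui_Fixd-pt-fr} with its normality hypothesis ``removed by extending the local-slice argument'', which is an assertion, not a proof. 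Finally, in the first part of the statement $R$ is an arbitrary ring containing $\mbbQ$, and nothing in your sketch addresses the non-Noetherian case, even though Asanuma's structure theorem and the $\A{1}$-fibration machinery you lean on all require Noetherianity.

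The paper's route (Remark \ref{Rem_KO-FPF-LND}) avoids both obstacles. It first proves Theorem \ref{Thm_A2-fib_Triv_FPF-LND} over Noetherian rings, via Asanuma's embedding $A \subseteq R^{[n]}$, the symmetric-algebra case (Proposition \ref{Lem_fpf-LND-on-Sym_Ker_fg-flat}) and Hamann's theorem; it then handles the possibly non-Noetherian stably polynomial case by a standard reduction producing a finitely generated $\mbbQ$-subalgebra $R_0 \subseteq R$ and $A_0 \subseteq A$ with $A_0^{[m]} = R_0^{[m+2]}$, $A_0 \otimes_{R_0} R = A$ and $D_0 := D|_{A_0}$ fixed point free. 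Theorem \ref{Thm_A2-fib_Triv_FPF-LND} gives $A_0 = \Ker{D_0}^{[1]}$, so $\Ker{D_0}^{[m+1]} = R_0^{[m+2]}$ and Theorem \ref{Hamann} yields $\Ker{D_0} = R_0^{[1]}$, hence $A_0 = R_0^{[2]}$ and $A = R^{[2]}$; the conclusion then follows from Theorem \ref{Essen_FixdPtLND-R^[2]-trivKer}, with the locally stably polynomial case disposed of by Theorem \ref{Thm_A2-fib_Triv_FPF-LND}, Theorem \ref{Hamann} and Theorem \ref{BCW_Sym}. Your step (iv) and your localisation/plinth-ideal argument for the second assertion are sound in themselves, but they sit on top of steps (i) and (iii), which are missing; as it stands the proposal has a genuine gap at its core.
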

		
However, it is not known whether Problem \ref{Prob_A2_FPF-LND} has affirmative answer when the $\A{2}$-fibration $A$ is either non-trivial or not stably polynomial over $R$. Note that the above results of Kahoui-Ouali essentially prove that the $\A{2}$-fibration $A$ is trivial.

\medskip

In this article we show, without any extra hypothesis on $A$, that Problem \ref{Prob_A2_FPF-LND} has affirmative answer when the base ring $R$ is Noetherian, specifically (see Theorem \ref{Thm_A2-fib_Triv_FPF-LND}),

\smallskip

\noindent
{\bf Theorem A:} Let $R$ be a Noetherian ring containing $\mbbQ$ and $A$ an $\A{2}$-fibration over $R$ with a fixed point free $R$-LND $D: A \longrightarrow A$. Then, $\KerD$ is an $\A{1}$-fibration over $R$ and $D$ has a slice, i.e., $A = \KerD^{[1]}$. In particular, if $R$ is a normal domain, then $A = {\Sym{R}{I}}^{[1]}$ for some invertible ideal $I$ of $R$.

\medskip

In view of Theorem A, we investigate the hypothesis ``$A$ is stably polynomial over $R$'' in Theorem \ref{Kahoui-A2-fib_trivial_criterion} and find that it helps producing another fixed point free $R$-LND of $A$ for which the $\A{2}$-fibration $A$ becomes trivial. More precisely, we observe the following (see Theorem \ref{Thm_FPF-LND_implies_another-LND}).

\smallskip

\noindent
\textbf{Theorem B:}
		Let $R$ be a Noetherian domain containing $\mbbQ$ and $A$ an $\A{2}$-fibration over $R$ having a fixed point free $R$-LND. Then, $A$ has another irreducible $R$-LND $D: A \longrightarrow A$ such that $\KerD = R^{[1]}$, and $A$ is an $\A{1}$-fibration over $\KerD$. Further, the following are equivalent.
\begin{enumerate}
	\item [\rm (I)] $D$ is fixed point free.
	\item [\rm (II)] $A$ is stably polynomial over $R$.
	\item [\rm (III)] $A = R^{[2]}$.
\end{enumerate}

The following is the outline of this article. In Section \ref{Sec_Prelim}	we recall definition and preliminary results for subsequent use; in Section \ref{Sec_A1-Fib_and_FPF-LNDs} we review results on $\A{1}$-fibrations having fixed point free derivations; in Section \ref{Sec_Main-Results} we discuss $\A{2}$-fibrations having fixed point free LNDs; and in Section \ref{Sec_Examples} we discuss a few examples.     

\section{Preliminaries} \label{Sec_Prelim}
In this section we setup notations, recall definitions and quote some results.

\subsection*{\textbf{Notation:}}
	
	Given a ring $R$ and an $R$-algebra $A$ we fix the following notation.\\
	$
	\begin{array}{lll}
	R^*& : & \text{Group of units of $R$}.\\
	\nil{R} & : & \text{Nilradical of $R$}.\\ 
	K & : & \text{Total quotient ring of $R$}.\\
	\text{Pic}(R) & : & \text{Picard group of $R$}.\\
	\Sym{R}{M} & : & \text{Symmetric algebra of an $R$-module $M$}.\\
	\Omega_R(A) & : & \text{Universal module of $R$-differentials of $A$}.\\
	\Der{R}{A} & : & \text{Module of $R$-derivations of $A$}.\\
	\trdeg{R}{A} & : & \text{Transcendence degree of $A$ over $R$, where $R \subseteq A$ are domains}.\\
	A_P & : & A \otimes_{R} R_P, \ \text{for} \ P \in \Spec{R}.
	\end{array}
	$
	
	\subsection*{\textbf{Definitions:}}
	A reduced ring $R$ is called \textit{seminormal} if whenever $a^2 = b^3$ for some $a,b \in R$, then there exists $t \in R$ such that $t^3 = a$ and $t^2 =b$.
	
	\medskip
	A subring $R$ of a ring $A$ is called a \textit{retract} of $A$, if there exists a ring homomorphism $\phi: A \longrightarrow R$ such that $\phi(r) =r$ for all $r \in R$. 
	
	\medskip
	A subring $R$ of a ring $A$ is said to be \textit{inert (factorially closed)} in $A$, if $fg \in R$ implies $f, g \in R$ for all $f,g \in A \backslash \{ 0 \}$.
	
	\medskip
	Let $A$ be an $\A{n}$-fibration over a ring $R$. An $m$-tuple of elements $\ul{W}: = (W_1, W_2, \cdots, W_m)$ in $A$ which are algebraically independent over $R$ is called an $m$-\textit{tuple residual variable} of $A$ if $A \otimes_R k(P) = (R[\ul{W}] \otimes_R k(P))^{[n-m]}$ for all $P \in \Spec{R}$. 
	\subsection*{\textbf{Preliminary results:}}
	We now quote few results for later use. The first one is by Hamann (\cite[Theorem 2.8]{Haman_Invariance}).
	\begin{thm} \label{Hamann}
		Let $R$ be a Noetherian ring containing $\mathbb{Q}$ and $A$ an $R$-algebra such that $A^{[m]} = R^{[m+1]}$ for some $m \in \mathbb{N}$. Then, $A = R^{[1]}$.
	\end{thm}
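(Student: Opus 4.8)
The plan is to establish that $A$ is an $\A{1}$-fibration over $R$, then to upgrade this, using that $A$ is \emph{stably} polynomial over $R$, to the conclusion $A = \Sym{R}{M}$ for a finitely generated projective $R$-module $M$ of rank one, and finally to read off from the stably polynomial identity that $M$ is free.

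First I would record the basic properties of $A$. Since the polynomial extension $A \hookrightarrow A^{[m]} = R^{[m+1]}$ is faithfully flat and $R^{[m+1]}$ is flat over $R$, $A$ is flat over $R$. The substitution $X_i \mapsto 0$ on $A^{[m]} = A[X_1,\dots,X_m]$ is an $R$-algebra retraction onto $A$, so $A$ is a retract of $R^{[m+1]}$; hence $A$ is finitely generated and smooth over $R$, and, composing with $R^{[m+1]} \to R$, it carries an $R$-algebra augmentation $\varepsilon \colon A \to R$ (a section of the fibration). For each $P \in \Spec{R}$ the fibre $B := A \otimes_R k(P)$ satisfies $B^{[m]} = k(P)^{[m+1]}$, so $\dim B = 1$, and by the Abhyankar--Eakin--Heinzer cancellation theorem $B = k(P)^{[1]}$. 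Thus $A$ is an $\A{1}$-fibration over $R$, smooth of relative dimension one, so $\Omega_R(A)$ is projective of rank one.

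The crucial step is to promote this to $A = \Sym{R}{M}$ for $M$ finitely generated projective of rank one; this is where the hypotheses ``$R$ Noetherian, $\mbbQ \subseteq R$'' are genuinely used. The idea is to combine the section $\varepsilon$ and the stably polynomial hypothesis to show $A$ is Zariski-locally on $\Spec{R}$ a polynomial ring, and then apply the Bass--Connell--Wright local--global principle for polynomial algebras to globalize to $\Sym{R}{M}$. I expect this to be the main obstacle: it is precisely the statement that fails in positive characteristic, where Asanuma's exotic $\A{1}$-fibrations provide counterexamples even to ``$A^{[1]} = R^{[2]} \Rightarrow A = R^{[1]}$''.

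Granting $A = \Sym{R}{M}$, the finish is formal. One has $R$-algebra isomorphisms $\Sym{R}{M \oplus R^m} = \Sym{R}{M}^{[m]} = A^{[m]} = R^{[m+1]} = \Sym{R}{R^{m+1}}$. Base-changing $\Omega_R(\,\cdot\,)$ along any $R$-augmentation of these symmetric algebras recovers the corresponding degree-one module (augmentations exist and correspond under the isomorphism), giving $M \oplus R^m \cong R^{m+1}$; so $M$ is stably free of rank one, and taking $(m{+}1)$-st exterior powers, $M \cong \bigwedge^{m+1}(M \oplus R^m) \cong \bigwedge^{m+1}(R^{m+1}) \cong R$. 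Hence $M$ is free and $A = \Sym{R}{R} = R^{[1]}$. Finally, I note two points. For $m=1$ one can bypass the structural step entirely: $\partial/\partial X$ on $A[X] = R^{[2]}$ is an $R$-LND with slice $X$, hence fixed point free, with kernel $A$, so Theorem~\ref{Essen_FixdPtLND-R^[2]-trivKer} gives $A = R^{[1]}$ at once. But one should not hope to reduce the case of general $m$ to $m=1$ by cancelling variables one at a time: that would require intermediate cancellations ``$C^{[1]} = R^{[k+1]} \Rightarrow C = R^{[k]}$'' with $k \ge 2$, i.e.\ Zariski-type cancellation onto higher-dimensional affine spaces, which is much more delicate (and not known in general for $k \ge 3$, even over a field).
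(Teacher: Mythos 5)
The paper offers no proof of this statement: it is quoted as Hamann's theorem (\cite[Theorem 2.8]{Haman_Invariance}) among the preliminaries, so there is no internal argument to compare yours against. Judged on its own, your proposal correctly assembles the routine outer layers. The retraction $X_i \mapsto 0$ exhibits $A$ as a quotient (hence finitely generated, and finitely presented over the Noetherian $R$) and as a retract of $R^{[m+1]}$, so flatness descends along the faithfully flat inclusion $A \hookrightarrow A^{[m]}$ and formal smoothness passes to the retract; the fibres satisfy $B^{[m]} = k(P)^{[m+1]}$ and are $k(P)^{[1]}$ by Abhyankar--Eakin--Heinzer; and, granting $A = \Sym{R}{M}$, comparing $\Omega_R$ along transported augmentations gives $M \oplus R^m \cong R^{m+1}$, whence $M \cong \bigwedge^{m+1} R^{m+1} \cong R$ is free. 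All of that is sound.

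The difficulty is that the step you yourself flag as ``the main obstacle'' --- proving $A_P = R_P^{[1]}$ for every $P \in \Spec{R}$ so that Theorem \ref{BCW_Sym} applies --- is not an obstacle one may expect to clear by combining $\varepsilon$ with the stably polynomial hypothesis in some unspecified way: it \emph{is} Hamann's theorem, in the local case to which the global statement reduces exactly as you describe. Your write-up never uses the section $\varepsilon$ or the hypothesis $\mbbQ \subseteq R$ to produce a local variable, and gives no hint of the characteristic-zero mechanism that makes the local statement true (and that fails in positive characteristic, as you correctly note via Asanuma's examples). So what you have is a correct reduction of the theorem to its own essential content, together with a correct formal deduction from that content; the mathematical core is absent. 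Two smaller remarks: your $m=1$ shortcut via $\partial/\partial X$ and Theorem \ref{Essen_FixdPtLND-R^[2]-trivKer} is a complete argument in that case only if van den Essen's theorem is not itself proved using Hamann's (worth checking before relying on it), and your warning that one cannot cancel variables one at a time for general $m$ is well taken.
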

	
	A classification of locally polynomial algebras by Bass-Connell-Wright (\cite[Theorem 4.4]{BCR_Local-Poly}) states
	
	\begin{thm}\label{BCW_Sym}
		Let $A$ be a finitely presented $R$-algebra such that $A_P$ is $R_P$-isomorphic to the symmetric algebra of some $R_P$-module for each $P \in \Spec{R}$. Then, $A$ is $R$-isomorphic to the symmetric algebra $\Sym{R}{M}$ for some finitely presented $R$-module $M$.
	\end{thm}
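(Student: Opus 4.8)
My strategy is to recover $M$ as the conormal module of an $R$-algebra retraction of $A$, to build everything locally on $\Spec{R}$, and then to glue. Indeed, if $A=\Sym{R}{N}$ and $\epsilon\colon A\to R$ is the $R$-algebra retraction killing $N$, then $\Ker{\epsilon}=NA$, $\Ker{\epsilon}/\Ker{\epsilon}^2\cong N$, and $A\cong\Sym{R}{\Ker{\epsilon}/\Ker{\epsilon}^2}$ in a manner determined by $\epsilon$ alone. So the plan has three steps: (i) produce a global $R$-algebra retraction $\epsilon\colon A\to R$; (ii) put $M:=\Ker{\epsilon}/\Ker{\epsilon}^2$ (equivalently $\Omega_{A/R}\otimes_AR$), a finitely presented $R$-module since $A$ is finitely presented over $R$; (iii) construct a comparison homomorphism relating $\Sym{R}{M}$ and $A$ and check that it is an isomorphism after localizing at each prime of $R$.

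For (i): by hypothesis every localization $A_P$ is a symmetric algebra over $R_P$, hence admits a retraction onto $R_P$, namely its augmentation. Since $A$ is finitely presented over $R$, such a retraction is pinned down by the images of finitely many algebra generators and therefore spreads out over a basic open neighbourhood of $P$; by quasi-compactness one gets a finite affine open cover of $\Spec{R}$ equipped with retractions $\epsilon_i$ of $A$. The $\epsilon_i$ need not agree on overlaps, but for a symmetric algebra $\Sym{S}{N}$ the set of $S$-algebra retractions onto $S$ is, by the universal property, a torsor under the ``translation'' automorphisms, i.e.\ under $\Hom{S}{N}{S}$; globalizing, the retractions of $A$ over opens of $\Spec{R}$ form a torsor under a quasi-coherent sheaf (here one uses that the relevant local modules are finitely presented, so that $\Hom$ commutes with localization). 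A quasi-coherent torsor over an affine scheme has vanishing $H^1$ and so has a global section, whence the $\epsilon_i$ may be corrected to a global retraction $\epsilon$. Alternatively, one simply invokes Bass--Connell--Wright at this point.

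For (iii): writing $I:=\Ker{\epsilon}$ and $M=I/I^2$, the $I$-adic filtration yields a canonical surjection of graded $R$-algebras $\Sym{R}{M}\twoheadrightarrow\mathrm{gr}_I(A)$, while the splitting $A=R\oplus I$ coming from $\epsilon$ provides the bridge between $A$ and $\mathrm{gr}_I(A)$. After localizing at an arbitrary prime $P$, the algebra $A_P$ is a symmetric algebra and $\epsilon_P$ becomes its standard augmentation once composed with a suitable translation automorphism; consequently $\mathrm{gr}_I(A)_P\cong\Sym{R_P}{M\otimes_RR_P}$ and the induced map $\Sym{R_P}{M\otimes_RR_P}\to A_P$ is an isomorphism. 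Since this holds at every prime and all the algebras and modules involved are finitely presented, the comparison map is a global isomorphism, that is $A\cong\Sym{R}{M}$. I expect the main obstacle to be step (i) --- assembling one global ``zero section'' out of the local augmentations, equivalently trivializing the torsor of retractions --- together with the closely related matter of writing the comparison map in step (iii) in a form whose local bijectivity is transparent; once a global retraction is in hand, the remainder is formal localization bookkeeping.
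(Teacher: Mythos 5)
First, a remark on the comparison you were asked for: the paper does not prove this statement at all --- it is quoted as \cite[Theorem 4.4]{BCR_Local-Poly} of Bass--Connell--Wright --- so there is no in-paper argument to measure yours against, and your proposal has to stand on its own as a proof of that theorem. (This also makes your fallback ``one simply invokes Bass--Connell--Wright at this point'' essentially circular: the statement being proved \emph{is} Bass--Connell--Wright.)

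Your overall architecture (global augmentation $\epsilon$, $M=I/I^2$ with $I=\Ker{\epsilon}$, local-to-global via quasi-coherent torsors over the affine scheme $\Spec{R}$) is indeed the right shape, but step (iii) has a genuine gap, and it is not ``formal localization bookkeeping''. You never construct the comparison map $\Sym{R}{M}\to A$: the canonical surjection lands in $\mathrm{gr}_I(A)$, and there is no canonical algebra map between $\mathrm{gr}_I(A)$ and $A$. To get a map into $A$ you must choose an $R$-linear section $s$ of $I\twoheadrightarrow I/I^2$ and extend it by the universal property of the symmetric algebra; such a section exists globally (by yet another torsor argument you would need to spell out), but an arbitrary section does \emph{not} induce an isomorphism, even locally, even when $A$ is honestly a polynomial ring. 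Take $A=R[x]$ over a field $R$, $\epsilon(x)=0$, $I=(x)$, and the section $s(\bar x)=x+x^2$ (legitimate, since $x+x^2\equiv x \bmod I^2$): the induced map $R[T]\to R[x]$, $T\mapsto x+x^2$, is injective but not surjective, and localizing at primes of $R$ changes nothing. Note that this map \emph{does} induce an isomorphism on $\mathrm{gr}_I$, which shows that ``isomorphism on associated graded'' does not imply ``isomorphism'' here ($A$ is not $I$-adically complete). Consequently your sentence ``$\epsilon_P$ becomes the standard augmentation after a translation; consequently the induced map is an isomorphism'' is unjustified: translations normalize the augmentation, but they cannot normalize the section. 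Repairing this is exactly where the substance of Bass--Connell--Wright lies: one corrects the section degree by degree, the obstruction at each stage living in $H^1$ of a quasi-coherent sheaf of the form $\Hom{R}{M}{\cdot}$ (equivalently, one trivializes a torsor under the group of augmentation-preserving automorphisms inducing the identity on $I/I^2$, which is non-abelian but filtered with abelian quasi-coherent quotients). Step (i) has a smaller issue of the same flavour: to conclude from $H^1=0$ you need a globally defined quasi-coherent sheaf of groups acting simply transitively on the sheaf of retractions, whereas your translation group $\Hom{S}{N}{S}$ is defined only in terms of a local trivialization $A|_U\cong \Sym{S}{N}$; you must check that these glue to a sheaf independent of the trivializations.
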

	
	The next result is by Swan (\cite[Theorem 6.1]{Swan_On-Sem}).
	
	\begin{thm} \label{Swan_SemiNormality}
		Let $R$ be a seminormal ring. Then, $\text{Pic}(R) = \text{Pic}(R^{[n]})$ for all $n \in \mathbb{N}$.
	\end{thm}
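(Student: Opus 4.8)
The claim is that $R\hookrightarrow R^{[n]}$ induces an isomorphism on Picard groups, and the plan is to reduce everything to the one–variable case. The first step is to note that seminormality is inherited by polynomial extensions: if $R$ is seminormal, so is $R[t]$. Indeed, if $f,g\in R[t]$ satisfy $f^{2}=g^{3}$, then comparing leading coefficients shows the leading coefficients $a$ of $f$ and $b$ of $g$ satisfy $a^{2}=b^{3}$ in $R$; seminormality of $R$ gives $c\in R$ with $c^{3}=a$ and $c^{2}=b$, and an induction on $\deg f$ produces $h\in R[t]$ with $h^{3}=f$ and $h^{2}=g$. Since $R^{[n]}=(R^{[n-1]})^{[1]}$ is then a tower of polynomial extensions of seminormal rings, it suffices to prove $\text{Pic}(R)\xrightarrow{\sim}\text{Pic}(R[t])$.

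Injectivity is immediate, since $t\mapsto 0$ gives a retraction $R[t]\twoheadrightarrow R$, so $\text{Pic}(R)\to\text{Pic}(R[t])\to\text{Pic}(R)$ is the identity. For surjectivity I would first reduce to the Noetherian case: $R$ is the filtered union of its finitely generated $\mbbZ$–subalgebras, and replacing each by its seminormalization inside $R$ exhibits $R$ as a filtered colimit of Noetherian seminormal rings (each essentially of finite type over $\mbbZ$, hence finite dimensional); as $\text{Pic}$ commutes with filtered colimits of rings, we may assume $R$ Noetherian, hence reduced with module–finite integral closure $\bar R$ in its total quotient ring. Let $\mathfrak c=\{x\in\bar R:x\bar R\subseteq R\}$ be the conductor, an ideal of both $R$ and $\bar R$ containing a non–zerodivisor; then
\[
\begin{CD}
R @>>> \bar R\\
@VVV @VVV\\
R/\mathfrak c @>>> \bar R/\mathfrak c
\end{CD}
\]
is a cartesian square with $\bar R\twoheadrightarrow\bar R/\mathfrak c$ surjective, and this property is preserved on applying $-[t]$. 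The units--Picard Mayer--Vietoris sequences of this Milnor square and of its $R[t]$–version then fit into a commutative ladder whose vertical maps are the comparison maps $\text{Pic}(S)\to\text{Pic}(S[t])$ for $S\in\{R,\bar R,R/\mathfrak c,\bar R/\mathfrak c\}$, together with the analogous maps on unit groups.

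On $\bar R$, a finite product of Noetherian normal domains, homotopy invariance of the divisor class group gives $\text{Pic}(\bar R)=\text{Pic}(\bar R[t])$. Since $\mathfrak c$ contains a non–zerodivisor, every prime in $V(\mathfrak c)$ strictly contains a minimal prime, so $\dim(R/\mathfrak c)<\dim R$; thus --- modulo the harmless reductions $\text{Pic}(S)=\text{Pic}(S_{\mathrm{red}})$ and $\text{Pic}(S[t])=\text{Pic}(S_{\mathrm{red}}[t])$ --- one wants to dispose of the conductor quotients by induction on Krull dimension, the base case $\dim R=0$ being trivial and the one–dimensional case being clean because the conductor quotients are then Artinian (whence $\text{Pic}(S)=\text{Pic}(S[t])=0$). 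A diagram chase through the ladder then reduces the surjectivity of $\text{Pic}(R)\to\text{Pic}(R[t])$ to a single unit–lifting assertion: every unit of $(\bar R/\mathfrak c)[t]$ specializing to $1$ at $t=0$ is the product of the image of a unit of $\bar R[t]$ and the image of a unit of $(R/\mathfrak c)[t]$. This is exactly Traverso's criterion that a reduced ring is seminormal precisely when $\text{Pic}(R)=\text{Pic}(R[t])$, and it is here that the hypothesis on $R$ is genuinely used.

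I expect the main obstacle to be precisely this last point --- establishing the unit–lifting step, and, in dimension $>1$, the delicate bookkeeping needed to carry the dimensional induction through the conductor quotients (which do not literally inherit seminormality, so one must instead induct on a suitably strengthened statement in the spirit of Traverso and Swan, rather than on the bare conclusion). The remaining ingredients --- the colimit reduction to the Noetherian case, the reductions modulo nilpotents, homotopy invariance of $\text{Pic}$ over normal rings, and the formal chase in the Mayer--Vietoris ladder --- are routine. Reassembling the inductions, first over $\dim R$ and then over $n$, completes the argument.
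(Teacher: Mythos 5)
The paper does not actually prove this statement; it is quoted from Swan (\cite[Theorem 6.1]{Swan_On-Sem}) as a preliminary, so your outline can only be measured against the Traverso--Swan argument it is modelled on. The skeleton you describe is the right one: injectivity via the retraction $t \mapsto 0$, reduction to the Noetherian case by writing $R$ as a filtered colimit of seminormalizations of its finitely generated $\mathbb{Z}$-subalgebras, the conductor (Milnor) square for $R \subseteq \overline{R}$, the units--Picard Mayer--Vietoris ladder, and homotopy invariance of $\text{Pic}$ over the normalization. But the proposal leaves genuine gaps at exactly the load-bearing points, and one of its reductions is broken as stated.

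First, the central step is named rather than proved: you reduce surjectivity to a unit-lifting assertion in $(\overline{R}/\mathfrak{c})[t]$ and then say this ``is exactly Traverso's criterion that a reduced ring is seminormal precisely when $\text{Pic}(R)=\text{Pic}(R[t])$.'' That criterion \emph{is} the theorem being proved (in the Noetherian, finite-normalization case), so as written the argument is circular; what is actually needed is the conductor characterization of seminormality --- for $R$ seminormal the conductor $\mathfrak{c}$ is a radical ideal of $\overline{R}$, so $\overline{R}/\mathfrak{c}$ is reduced and every unit of $(\overline{R}/\mathfrak{c})[t]$ is constant --- and that has to be established, not invoked. Second, you yourself note that $R/\mathfrak{c}$ and $\overline{R}/\mathfrak{c}$ need not be seminormal, so the proposed induction on dimension does not close on the statement being proved; Swan avoids this by a different induction (through elementary subintegral extensions $R \subseteq R[b]$ with $b^2, b^3 \in R$), and without such a replacement your dimensional induction is a placeholder rather than a proof. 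Third, the reduction to $n=1$ rests on ``$R$ seminormal $\Rightarrow$ $R[t]$ seminormal'' proved by comparing leading coefficients; over a reduced ring with zero-divisors degrees are not additive (already in $k \times k$ one has products of nonconstant polynomials of smaller degree than expected), so that argument fails, and in Swan's treatment this permanence property is in fact \emph{deduced} from the multivariable Picard statement rather than used as an input to it. The theorem is of course true, but each of these three points requires a genuine argument that the proposal does not supply.
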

	
	Asanuma established the following structure theorem (\cite[Theorem 3.4]{Asanuma_fibre_ring}) of affine fibrations over Noetherian rings.
	\begin{thm} \label{Asanuma_struct-fib-th}
		Let $R$ be a Noetherian ring and $A$ an $\A{r}$-fibration over $R$. Then, $\Omega_R(A)$ is a projective $A$-module of rank $r$ and $A$ is an $R$-subalgebra (up to an isomorphism) of a polynomial ring $R^{[m]}$ for some $m \in \mathbb{N}$ such that $A^{[m]} = \mbox{Sym}_{R^{[m]}} (\Omega_R(A) \otimes_A R^{[m]})$ as $R$-algebras; and therefore, $A$ is retract of $R^{[n]}$ for some $n \in \mbbN$.
	\end{thm}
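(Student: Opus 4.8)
The plan is to obtain the theorem from the smoothness of $A$ over $R$ together with a formal-neighbourhood analysis of a finite polynomial presentation of $A$. First I would note that $A$, being a finitely generated flat algebra over the Noetherian ring $R$, is finitely presented over $R$, and that each fibre $A \otimes_R k(P) = k(P)^{[r]}$ is smooth, hence geometrically regular, over $k(P)$; by the fibre-wise criterion for smoothness this forces $A$ to be smooth over $R$ of relative dimension $r$. Consequently $\Omega_R(A)$ is a finitely presented, locally free $A$-module of rank $r$, i.e. projective of rank $r$, which is the first assertion. (One could instead try to argue projectivity directly from the flatness of $A$ and the base-change identity $\Omega_R(A) \otimes_A (A \otimes_R k(P)) \cong \Omega_{k(P)}(k(P)^{[r]})$, which is free of rank $r$, but that route is more delicate.)

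For the structural assertion I would fix a surjection $\psi\colon B := R^{[\ell]} \twoheadrightarrow A$ coming from a finite generating set of $A$, with kernel $I$. Since both $B$ and $A$ are smooth over $R$, the closed immersion $\Spec{A} \hookrightarrow \Spec{B}$ is a regular immersion, so $I$ is a quasi-regular ideal. Hence the conormal sequence
\[
0 \longrightarrow I/I^2 \longrightarrow \Omega_R(B) \otimes_B A \longrightarrow \Omega_R(A) \longrightarrow 0
\]
is split exact with projective terms, so that $I/I^2$ is projective of rank $\ell - r$ and $A^{\ell} \cong (I/I^2) \oplus \Omega_R(A)$; the canonical map $\Sym{A}{I/I^2} \to \bigoplus_{k \ge 0} I^{k}/I^{k+1}$ is an isomorphism of graded $A$-algebras; and, using the formal smoothness of $A$ over $R$ to split the truncations $B/I^{k+1} \twoheadrightarrow B/I^{k}$, the $I$-adic completion $\widehat{B}$ is isomorphic, as a topological $A$-algebra, to the completion of $\Sym{A}{I/I^2}$ along its augmentation ideal.

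The crux --- and the step I expect to be the real obstacle --- is to ``de-complete'' this picture at the cost of finitely many extra polynomial variables. The idea is that the new indeterminates $t_1, \dots, t_m$ of $A^{[m]}$ can be used as homogenising parameters realising the graded data $\bigoplus_k I^{k}/I^{k+1}$ inside an honest (uncompleted) $R$-algebra; carrying this out, via a dilatation / extended-Rees-type construction attached to $I$ and using the Noetherian hypothesis on $R$ to keep all algebras finitely generated, should yield an $R$-algebra isomorphism $A^{[m]} \cong \Sym{R^{[m]}}{\Omega_R(A) \otimes_A R^{[m]}}$ in which the distinguished copy of $R^{[m]}$ simultaneously supplies the asserted embedding $A \hookrightarrow R^{[m]}$ (with respect to which $R^{[m]}$ is regarded as an $A$-algebra). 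This is precisely the content of \cite[Theorem 3.4]{Asanuma_fibre_ring}, whose argument I would follow in detail.

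The retract statement is then formal. Setting $P := \Omega_R(A) \otimes_A R^{[m]}$, a finitely generated projective $R^{[m]}$-module, one picks a finitely generated projective $Q$ with $P \oplus Q \cong (R^{[m]})^{N}$; then $\Sym{R^{[m]}}{P \oplus Q} \cong \Sym{R^{[m]}}{P} \otimes_{R^{[m]}} \Sym{R^{[m]}}{Q}$ while $\Sym{R^{[m]}}{(R^{[m]})^{N}} = R^{[m+N]}$, so the $R^{[m]}$-linear inclusion $P \hookrightarrow P \oplus Q$ and projection $P \oplus Q \twoheadrightarrow P$ induce $R$-algebra homomorphisms exhibiting $A^{[m]} = \Sym{R^{[m]}}{P}$ as a retract of $R^{[m+N]}$. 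Since $A$ is visibly a retract of $A^{[m]}$ (kill the new variables), composing retractions shows $A$ is a retract of $R^{[n]}$ with $n := m+N$, which is the last assertion.
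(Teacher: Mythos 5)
This statement is one of the paper's quoted preliminaries: the authors give no proof at all, citing it directly to Asanuma, so the only question is whether your proposal actually constitutes a proof. Two of your three pieces do. The smoothness argument for the first assertion is correct and complete: $A$ is finitely presented over the Noetherian ring $R$, flat, with fibres $k(P)^{[r]}$ that are smooth over $k(P)$, so the fibre-wise criterion makes $A$ smooth over $R$ of relative dimension $r$, and $\Omega_R(A)$ is projective of rank $r$ (the rank computation on each fibre is as you indicate). The deduction of the retract statement from the structural isomorphism is also correct: writing $P = \Omega_R(A)\otimes_A R^{[m]}$ and choosing $Q$ with $P\oplus Q$ free of rank $N$, the inclusion and projection between $\Sym{R^{[m]}}{P}$ and $\Sym{R^{[m]}}{P\oplus Q}=R^{[m+N]}$ exhibit $A^{[m]}$ as a retract of $R^{[m+N]}$, and composing with the evaluation $A^{[m]}\twoheadrightarrow A$ killing the extra variables gives the claim with $n=m+N$.

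The genuine gap is the middle assertion, which is the entire content of Asanuma's theorem. Your conormal-sequence setup (regularity of the immersion $\Spec{A}\hookrightarrow\Spec{B}$ for $B=R^{[\ell]}$, the split exact sequence giving $A^{\ell}\cong (I/I^2)\oplus\Omega_R(A)$, and the identification of the $I$-adic completion of $B$ with the completed symmetric algebra of $I/I^2$) is all fine, but it only describes a formal neighbourhood of $\Spec{A}$ in $\Spec{B}$. The step you yourself flag as ``the real obstacle'' --- recovering from this an honest $R$-algebra isomorphism $A^{[m]}\cong \Sym{R^{[m]}}{\Omega_R(A)\otimes_A R^{[m]}}$ together with an embedding $A\hookrightarrow R^{[m]}$ --- is exactly the theorem to be proven, and the proposed ``extended-Rees / dilatation'' de-completion is only a heuristic: there is no general mechanism for passing from an isomorphism of $I$-adic completions to a stable polynomial identity for the uncompleted algebras, and Asanuma's actual argument does not proceed this way (it works directly with a linearization of a chosen presentation rather than with completions of the presentation ideal). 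Ending the key step with ``this is precisely the content of \cite[Theorem 3.4]{Asanuma_fibre_ring}, whose argument I would follow'' makes the proposal circular as a proof; as a citation it merely reproduces what the paper already does. So the proposal should be regarded as a correct proof of the first and last assertions conditional on the cited structural isomorphism, not as a proof of the theorem.
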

	
	\begin{cor} \label{Cor_A1-fib_Triv_DiffMod_Extended}
		
		Let $R$ be a Noetherian ring containing $\mbbQ$ and $A$ an $\A{1}$-fibration over $R$. If $\Omega_R(A)$ is extended from $R$, specifically, when $R$ is seminormal, then $A = \Sym{R}{N}$ for some finitely generated rank one projective $R$-module $N$.
	\end{cor}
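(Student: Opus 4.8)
The plan is to derive this from Asanuma's structure theorem (Theorem \ref{Asanuma_struct-fib-th}), to pin down the relevant Picard group by Swan's theorem (Theorem \ref{Swan_SemiNormality}), and then, after localizing, to cancel a single variable by invoking Hamann's theorem (Theorem \ref{Hamann}), finally reassembling the pointwise information through the Bass--Connell--Wright classification (Theorem \ref{BCW_Sym}).

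First I would apply Theorem \ref{Asanuma_struct-fib-th} to the $\A{1}$-fibration $A$ over the Noetherian ring $R$: the module $\Omega_R(A)$ is projective of rank one over $A$, and there are $m \in \mbbN$ and an $R$-embedding $A \subseteq R^{[m]}$ with $A^{[m]} = \Sym{R^{[m]}}{\Omega_R(A) \otimes_A R^{[m]}}$. The next step is to show that the $R^{[m]}$-module $\Omega_R(A) \otimes_A R^{[m]}$ is extended from a finitely generated rank one projective $R$-module $L$. If $\Omega_R(A) \cong L \otimes_R A$ for an $R$-module $L$, this is immediate after tensoring with $R^{[m]}$ over $A$; moreover $A$ is faithfully flat over $R$ (flat, with non-zero fibre at every point of $\Spec{R}$), so faithfully flat descent along $R \hookrightarrow A$ makes $L$ a finitely generated rank one projective $R$-module. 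If instead $R$ is only seminormal, then $\Omega_R(A) \otimes_A R^{[m]}$ is an invertible $R^{[m]}$-module and Swan's theorem gives $\text{Pic}(R^{[m]}) = \text{Pic}(R)$, so it is again of the form $L \otimes_R R^{[m]}$ for an invertible $R$-module $L$. In either case the base-change formula for symmetric algebras yields, as $R$-algebras,
\[
A^{[m]} \;=\; \Sym{R^{[m]}}{L \otimes_R R^{[m]}} \;=\; \Sym{R}{L} \otimes_R R^{[m]} \;=\; \Sym{R}{L}^{[m]}.
\]

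Now I would localize and cancel. For each $P \in \Spec{R}$ the module $L_P$ is free of rank one over $R_P$, so the last display localizes to $A_P^{[m]} = R_P^{[m+1]}$; since $R_P$ is Noetherian and contains $\mbbQ$, Hamann's theorem forces $A_P = R_P^{[1]}$. Thus $A_P$ is $R_P$-isomorphic to a symmetric algebra for every $P \in \Spec{R}$, and $A$, being finitely generated over the Noetherian ring $R$, is finitely presented over $R$; hence Bass--Connell--Wright gives $A = \Sym{R}{N}$ for some finitely presented $R$-module $N$. Finally, $N$ sits inside $A = \Sym{R}{N}$ as the degree-one graded piece, hence as an $R$-module direct summand of $A$, which is $R$-flat; so $N$ is $R$-flat and, being finitely presented, $R$-projective; and since $A$ is an $\A{1}$-fibration, $\Sym{k(P)}{N \otimes_R k(P)} \cong A \otimes_R k(P) = k(P)^{[1]}$, forcing $\dim_{k(P)} (N \otimes_R k(P)) = 1$ for every $P$. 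So $N$ has rank one, which is the asserted conclusion.

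The main obstacle --- and the reason this is not a one-line consequence of Asanuma's theorem --- is the cancellation step: one cannot pass from $A^{[m]} \cong \Sym{R}{L}^{[m]}$ to $A \cong \Sym{R}{L}$ globally, as cancellation can fail over a general Noetherian ring; so the argument must localize until $L$ becomes free, use Hamann's polynomial cancellation, and then rebuild the pointwise triviality via Bass--Connell--Wright. A secondary subtlety is producing a rank one projective $R$-module out of $\Omega_R(A)$: under the bare ``extended'' hypothesis this needs faithfully flat descent, whereas in the seminormal case Swan's theorem supplies it directly.
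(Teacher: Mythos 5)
Your argument is correct and is exactly the route the paper intends: its proof of Corollary \ref{Cor_A1-fib_Triv_DiffMod_Extended} simply cites Theorem \ref{Asanuma_struct-fib-th}, Theorem \ref{Swan_SemiNormality}, Theorem \ref{Hamann} and Theorem \ref{BCW_Sym}, and you have filled in precisely the chain (structure theorem, extendedness of the rank-one module via descent or Swan, local cancellation by Hamann, reassembly by Bass--Connell--Wright) that this citation presupposes. No gaps.
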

	\begin{proof}
		Follows from Theorem \ref{Asanuma_struct-fib-th}, Theorem \ref{Swan_SemiNormality}, Theorem \ref{Hamann} and Theorem \ref{BCW_Sym}.
	\end{proof}

	We end this section by registering the following result of Das-Dutta (\cite[Corollary 3.6, Lemma 3.12, Theorem 3.16 \& Corollary 3.19]{DD_residual}).
	
	\begin{thm}\label{DD_fib}
		Let $R$ be a Noetherian ring and $A$ an $\A{n}$-fibration over $R$. Suppose, $\ul{W} \in A$ is an $m$-tuple residual variable of $A$. Then, $A$ is an $\A{n-m}$-fibration over $R[\ul{W}]$ and $\Omega_R(A) = \Omega_{R[\ul{W}]}(A) \oplus A^m$. Further, if $A$ is stably polynomial over $R \hookleftarrow \mbbQ$ and $n-m=1$, then $A = R[\ul{W}]^{[1]} = R^{[n]}$.
	\end{thm}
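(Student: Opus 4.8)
The plan is to establish the three assertions in order, each feeding the next. For the first, that $A$ is an $\A{n-m}$-fibration over $B := R[\ul{W}]$: finite generation is immediate, since generators of $A$ over $R$ generate $A$ over the larger ring $B$. For the fibres, fix $Q \in \Spec{B}$ lying above $P \in \Spec{R}$; then $B \otimes_R k(P) = k(P)[\ul{W}]$, the residual-variable hypothesis gives $A \otimes_R k(P) = (B \otimes_R k(P))^{[n-m]}$, and base-changing along $B \otimes_R k(P) \to k(Q)$ yields $A \otimes_B k(Q) = k(Q)^{[n-m]}$. The remaining, and genuinely nontrivial, point is flatness of $A$ over $B$: here $B = R^{[m]}$ is $R$-flat, $A$ is $R$-flat by hypothesis, and each fibre $A \otimes_R k(P)$ is a polynomial ring, hence flat, over $B \otimes_R k(P)$; the fibrewise criterion for flatness then gives, prime by prime of $A$, that $A$ is flat over $B$. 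I expect this to be the main obstacle: the finiteness and fibre conditions are formal, but upgrading ``$R$-flat with polynomial fibres over $B$'' to ``$B$-flat'' genuinely needs that criterion.

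Granting the first assertion, the second follows from Asanuma's structure theorem (Theorem~\ref{Asanuma_struct-fib-th}) applied twice: over $R$ it gives $\Omega_R(A)$ projective of rank $n$, and over the Noetherian ring $B$ it gives $\Omega_{R[\ul{W}]}(A) = \Omega_B(A)$ projective of rank $n-m$. Since $B = R^{[m]}$, the module $\Omega_R(B) \otimes_B A$ is free of rank $m$, so the first fundamental exact sequence for $R \to B \to A$ collapses to $A^m \to \Omega_R(A) \to \Omega_{R[\ul{W}]}(A) \to 0$. Projectivity of $\Omega_{R[\ul{W}]}(A)$ splits this surjection, so $\Omega_R(A) \cong \Omega_{R[\ul{W}]}(A) \oplus E$, where $E$ is simultaneously a quotient of $A^m$ and a direct summand of the projective module $\Omega_R(A)$, hence finitely generated projective of rank $m$; a surjection $A^m \twoheadrightarrow E$ onto a projective module of rank $m$ must be an isomorphism, which both identifies $E$ with $A^m$ and shows the left map of the sequence is injective. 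This yields $\Omega_R(A) = \Omega_{R[\ul{W}]}(A) \oplus A^m$.

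For the third assertion, assume in addition $n - m = 1$, $\mbbQ \hookrightarrow R$, and $A$ stably polynomial over $R$, say $A^{[j]} = R^{[\ell]}$. Tensoring with $k(P)$ forces $k(P)^{[n+j]} = k(P)^{[\ell]}$, hence $\ell = n+j$ and $A^{[j]} = R^{[n+j]}$. Since $m = n-1$, we get $B^{[j+1]} = (R^{[n-1]})^{[j+1]} = R^{[n+j]} = A^{[j]}$. Now $B = R[\ul{W}]$ is Noetherian and contains $\mbbQ$, so Hamann's theorem (Theorem~\ref{Hamann}) applied with base ring $B$ gives $A = B^{[1]} = R[\ul{W}]^{[1]} = R^{[n]}$. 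Thus, once the flatness step of the first assertion is in place, the rest is bookkeeping with the theorems of Asanuma and Hamann.
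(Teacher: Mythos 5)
A preliminary remark: the paper does not actually prove Theorem \ref{DD_fib} --- it quotes it from Das--Dutta --- so your argument is being judged on its own terms rather than against an in-paper proof. Your treatment of the first two assertions is correct. Finite generation and the fibre condition over $B := R[\ul{W}]$ are formal exactly as you say; the fibrewise flatness criterion (EGA IV 11.3.10) is the right tool to upgrade ``$A$ is $R$-flat with polynomial, hence flat, fibres over $B \otimes_R k(P)$'' to ``$A$ is $B$-flat''; and the splitting of the first fundamental exact sequence $A^m \to \Omega_R(A) \to \Omega_{R[\ul{W}]}(A) \to 0$ via projectivity of $\Omega_{R[\ul{W}]}(A)$ (from Theorem \ref{Asanuma_struct-fib-th} over the Noetherian ring $B$), together with the rank count showing $A^m \twoheadrightarrow E$ is an isomorphism, correctly yields $\Omega_R(A) = \Omega_{R[\ul{W}]}(A) \oplus A^m$.

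The third assertion, however, contains a genuine gap. From $A^{[j]} = R^{[n+j]}$ and $B^{[j+1]} = R^{[n+j]}$ you conclude $A^{[j]} = B^{[j+1]}$ and apply Hamann's theorem with base ring $B$. But Theorem \ref{Hamann} requires $A^{[j]} \cong B^{[j+1]}$ as \emph{$B$-algebras}, whereas your chain of equalities only produces an $R$-algebra isomorphism $A^{[j]} \cong R^{[n+j]} \cong B^{[j+1]}$; under it the subring $B = R[\ul{W}]$ of $A^{[j]}$ is carried to some copy of $R^{[m]}$ inside $R^{[n+j]}$ that has no reason to be a coordinate subring. Deciding whether $\ul{W}$ becomes part of a coordinate system of $A^{[j]}$ is precisely the hard content of the ``stably polynomial'' clause, so this step begs the question. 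The standard repair goes through differentials, and your second assertion already supplies the key ingredient: stable polynomiality of $A$ over $R$ makes $\Omega_R(A)$ a stably free $A$-module, hence its direct summand $\Omega_{R[\ul{W}]}(A) = \Omega_R(A)/A^m$ is stably free of rank one; then, exactly as in the proof of (III) $\Rightarrow$ (I) of Proposition \ref{Kahoui_fpf-LND_A1-fib} but over the Noetherian base $B$, Asanuma's structure theorem produces a genuine $B$-algebra isomorphism $A^{[\ell+s]} \cong B^{[\ell+s+1]}$, to which Hamann legitimately applies and gives $A = B^{[1]} = R^{[n]}$.
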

	
	It is to be noted that though Das-Dutta, in \cite{DD_residual}, proved Theorem \ref{DD_fib} (see \cite[Corollary 3.19]{DD_residual}) with the hypothesis that the base ring is a Noetherian domain containing $\mathbb{Q}$, from their proof it follows that Theorem \ref{DD_fib} holds over Noetherian rings (not necessarily domains) containing $\mathbb{Q}$.
	
	\section{Structure of $\A{1}$-fibrations having fixed point free derivations} \label{Sec_A1-Fib_and_FPF-LNDs}
	In \cite{Kahoui_Cancellation}, Kahoui-Ouali, proved (\cite[Corollary 2.5]{Kahoui_Cancellation}) that an $\A{1}$-fibration over a Noetherian domain containing $\mathbb{Q}$ is trivial if and only if it has a fixed point free derivation. In this section we show that the result of Kahoui-Ouali holds even over Noetherian rings (not necessarily domains) containing $\mathbb{Q}$ (see Proposition \ref{Kahoui_fpf-LND_A1-fib}), which we shall use in the next section. Though the proof of this observation follows from the proof of \cite[Corollary 2.5]{Kahoui_Cancellation}, for the convenience of the readers it is detailed here. First, we note a criterion for an algebra to be finitely generated, which also estimates an upper bound to the minimum number of generators of the algebra.
	 
	\begin{lem} \label{Lem_n-generated_modulo-nilR}
	Let $R$ be a ring, $\eta = \nil{R}$ and $A$ an $R$-algebra. Suppose, $x_i \in A$, $i=1,2, \cdots, n$ are such that $A/\eta A = R/\eta [\bar{x_1}, \bar{x_2}, \cdots, \bar{x_n}]$ where $\bar{x_i}$'s are the images of $x_i$'s in $A/\eta A$. Then, $A = R[x_1, x_2, \cdots, x_n]$.
	\end{lem}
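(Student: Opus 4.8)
The plan is to put $B := R[x_1, x_2, \ldots, x_n]$, the $R$-subalgebra of $A$ generated by the $x_i$, and to prove $B = A$; once this is done the asserted bound is automatic, since $B$ is generated over $R$ by exactly the $n$ elements $x_i$ whose classes generate $A/\eta A$ over $R/\eta$. The first step is to read off from the hypothesis that $A = B + \eta A$: for $a \in A$ the image $\bar a \in A/\eta A = R/\eta\,[\bar x_1, \ldots, \bar x_n]$ is a polynomial $\sum_{\nu} \bar r_\nu\,\bar x^{\,\nu}$ with $\bar r_\nu \in R/\eta$, and lifting the $\bar r_\nu$ to elements $r_\nu \in R$ and setting $b := \sum_\nu r_\nu\, x^{\nu} \in B$ gives $a - b \in \eta A$, i.e.\ $a \in B + \eta A$. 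The second step is a routine iteration: since $\eta \subseteq R \subseteq B$ we have $\eta B \subseteq B$, so
$$A = B + \eta A = B + \eta(B + \eta A) = B + \eta B + \eta^2 A = B + \eta^2 A,$$
and inductively $A = B + \eta^{m} A$ for every $m \in \mbbN$.

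It then remains to deduce $A = B$ from ``$A = B + \eta^{m} A$ for all $m$''. When $\eta = \nil R$ is a nilpotent ideal --- in particular whenever $R$ is Noetherian, which is the situation in every application of this lemma in the present paper --- there is an $N$ with $\eta^{N} = 0$, whence $A = B + \eta^{N} A = B$ and we are done. For a general base ring one would argue element by element: given $a \in A$, in the expression $a = b + \sum_{i=1}^{k} \epsilon_i u_i$ produced above ($\epsilon_i \in \eta$, $u_i \in A$) only finitely many nilpotents of $R$ occur, so $\mathfrak n := (\epsilon_1, \ldots, \epsilon_k)$ is a finitely generated ideal consisting of nilpotents, hence is itself nilpotent, say $\mathfrak n^{N} = 0$; one would like to conclude $a \in B + \mathfrak n^{N} A = B$.

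The point I expect to require real work is exactly this last passage in the non-Noetherian case: resolving the $u_i$ modulo $\eta A$ may re-introduce nilpotents lying outside $\mathfrak n$, so keeping the iteration confined to a fixed nilpotent ideal is delicate. Everything preceding it is purely formal; in the Noetherian setting --- where $\nil R$ is already nilpotent --- the whole proof is the two displayed lines together with nilpotence of $\nil R$, and that is all that is actually invoked in the sequel.
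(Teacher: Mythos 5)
Your proof is essentially the paper's own: the paper writes ``Clearly, $A = R[x_1, x_2, \cdots, x_n] + \eta A$'' and then concludes immediately from the existence of $\ell \in \mathbb{N}$ with $\eta^{\ell} = (0)$, which is exactly your two displayed lines plus nilpotence of $\eta$. The reservation you raise about the non-Noetherian case is well taken and in fact applies to the paper itself: the lemma is stated for an arbitrary ring $R$, yet the proof silently assumes $\nil{R}$ is nilpotent, which holds for Noetherian rings (and more generally when the relevant nilpotents generate a finitely generated ideal) but not in general; since every invocation of the lemma in the sequel is over a Noetherian base, this is harmless in practice, but as literally stated the paper's argument has precisely the gap you flagged rather than you having missed anything.
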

	\begin{proof}
		Clearly, $A = R[x_1, x_2, \cdots, x_n] + \eta A$. Since there exists $\ell \in \mathbb{N}$ such that $\eta^{\ell} = (0)$, we see that $A = R[x_1, x_2, \cdots, x_n]$. 
	\end{proof}
	
	The following result gives a criterion for a singly-generated algebra to be a polynomial algebra.
	\begin{lem} \label{Lem_SinglyGeneratedAlg-PolyAlg}
		Let $R$ be a ring containing $\mathbb{Q}$, $\eta = \nil{R}$ and $A$ an $R$-algebra such that $A/\eta A$ is a singly-generated $R/\eta$-algebra and $(A/\eta A)^* = (R/\eta)^*$. Then, $A = R^{[1]}$ if and only if there exists $D \in \Der{R}{A}$ such that the induced $R/\eta$-derivation $\ol{D}: A/\eta A \longrightarrow A /\eta A$ is fixed point free.
	\end{lem}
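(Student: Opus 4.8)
The plan is to dispatch the easy implication by inspection, and to prove the substantive one by reducing modulo $\eta$, using the fixed point free hypothesis to locate a variable downstairs, and then lifting it. For ``$A = R^{[1]} \Rightarrow$'': writing $A = R[x]$ with $x$ a variable, the derivation $D = \partial/\partial x \in \Der{R}{A}$ restricts to $\eta A$ and descends to $\partial/\partial\bar{x}$ on $A/\eta A = (R/\eta)^{[1]}$, which sends $\bar{x}$ to $1$; hence $1 \in \ol{D}(A/\eta A)$ and $\ol{D}$ is fixed point free.

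For the converse, put $\bar{R} = R/\eta$, $\bar{A} = A/\eta A$, with quotient map $\pi\colon A \to \bar{A}$. Since $\eta$ is nilpotent, $\eta A$ is a nil ideal, so (i) $D$ maps $\eta A$ into $\eta A$ (because $D$ kills $\eta \subseteq R$), giving the induced $\bar{R}$-derivation $\ol{D}$, and (ii) an element of $A$ is a unit exactly when its image in $\bar{A}$ is a unit. Choose a single $\bar{R}$-algebra generator $\bar{x}$ of $\bar{A}$ and lift it to $x \in A$; by Lemma~\ref{Lem_n-generated_modulo-nilR} this forces $A = R[x]$. The crucial observation is that $\ol{D}(\bar{x})$ is a unit in $\bar{A}$: since $\bar{A} = \bar{R}[\bar{x}]$, the Leibniz rule gives $\ol{D}(g(\bar{x})) = g'(\bar{x})\,\ol{D}(\bar{x})$ for every $g$, so the ideal $\ol{D}(\bar{A})\bar{A}$ generated by the image of $\ol{D}$ coincides with the principal ideal $\ol{D}(\bar{x})\bar{A}$; fixed point freeness makes this all of $\bar{A}$, whence $\ol{D}(\bar{x}) \in \bar{A}^{*}$ (equivalently, by the hypothesis $(A/\eta A)^{*} = (R/\eta)^{*}$, in $\bar{R}^{*}$). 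As $\pi(D(x)) = \ol{D}(\bar{x})$ is a unit in $\bar{A}$, the element $D(x)$ is a unit in $A$; and since $gD \in \Der{R}{A}$ for any $g \in A$, the derivation $D_1 := D(x)^{-1}D$ satisfies $D_1(x) = 1$.

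It then remains to check that $x$ is transcendental over $R$, which completes the proof because $A = R[x]$. If some nonzero $p \in R[T]$ of minimal degree $n$ satisfied $p(x) = 0$, then (as $R \subseteq A$, a nonzero constant cannot vanish at $x$) we would have $n \ge 1$, and applying $D_1$ with $D_1(x) = 1$ would give $p'(x) = 0$; but $\mathbb{Q} \subseteq R$ makes $p'$ a nonzero polynomial of degree $n-1 < n$, contradicting the minimality of $n$ (for $n = 1$, $p'$ is a nonzero constant vanishing at $x$, equally impossible). Hence no such $p$ exists and $A = R[x] = R^{[1]}$.

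I expect the point requiring the most care to be the ``crucial observation'' together with the closing transcendence argument: it is precisely the singly-generated hypothesis that upgrades the ideal-theoretic condition $\ol{D}(\bar{A})\bar{A} = \bar{A}$ to the statement that $\ol{D}(\bar{x})$ is an honest unit (so that $D_1(x) = 1$ can be arranged), and it is $\mathbb{Q} \subseteq R$ that makes the derivative-degree descent terminate; the unit hypothesis and the nilpotence of $\eta$ are what let one pass cleanly back and forth between $A$ and $A/\eta A$. The remaining ingredients — lifting generators via Lemma~\ref{Lem_n-generated_modulo-nilR}, units lifting through a nil ideal, and $gD$ being a derivation — are routine.
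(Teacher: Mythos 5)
Your proof is correct and follows essentially the same route as the paper's: reduce modulo the nilradical, invoke Lemma~\ref{Lem_n-generated_modulo-nilR} to get $A = R[x]$, use fixed point freeness together with $(A/\eta A)^* = (R/\eta)^*$ to see that $D(x)$ is a unit, and deduce transcendence of $x$ by differentiating a putative relation, using $\mathbb{Q} \subseteq R$. The only cosmetic differences are that you normalize to $D_1 = D(x)^{-1}D$ and run a minimal-degree descent where the paper iterates $D$ on the same relation, and you spell out the principal-ideal step $\ol{D}(\bar A)\bar A = \ol{D}(\bar x)\bar A$ that the paper leaves as ``easy to see''.
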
  
	\begin{proof}
		If $A = R^{[1]}$, then it is easy to see that there exists a fixed point free $R/\eta$-derivation of $A/\eta A$. So, we prove the converse. According to the hypotheses, $A/\eta A$ is generated by a single element over $R/\eta$, and therefore, by Lemma \ref{Lem_n-generated_modulo-nilR}, we have $A =R[x]$ for some $x \in A$. Suppose, $D \in \Der{R}{A}$ is such that the induced $R/\eta$-derivation $\ol{D}: A/\eta A \longrightarrow A/\eta A$ is fixed point free. Letting $\bar{x}$ as the image of $x$ in $A/\eta A$, it is easy to see that $\ol{D}(\bar{x}) \in (A/\eta A)^* = (R/\eta)^*$, i.e., the image of $D(x)$ in $A/\eta A$ is a unit in $R/\eta$, and therefore, $D(x)$ is a unit in $R$. We claim that $x$ does not satisfy any algebraic relation over $R$. On the contrary, if there exists $a_i \in R$ for all $i =0, 1, \cdots, n$, $a_n \ne 0$ such that $a_0 + a_1 x+ \cdots + a_n x^n =0$, then we have $D(x) (a_1 + 2a_2 x + 3a_3 x^2  \cdots + na_n x^{n-1}) =0$. Since $D(x) \in R^*$, we get $a_1 + 2a_2 x + 3a_3 x^2  \cdots + na_n x^{n-1} =0$, from which again we see that $D(x)(2a_2 + 6a_3 x + 12a_4 x^2 + \cdots + n(n-1)a_n x^{n-2}) =0$. Repeating same arguments, we eventually get $D(x)(n!)a_n =0$, i.e., $a_n =0$ which is a contradiction to our assumption that $a_n \ne 0$. This proves that $A = R[x] = R^{[1]}$.  
	\end{proof}
	
	We now prove the main result of this section: an $\A{1}$-fibration over a Noetherian ring containing $\mathbb{Q}$ is trivial if and only if it has a fixed point free derivation.
	
	\begin{prop} \label{Kahoui_fpf-LND_A1-fib}
		Let $R$ be a Noetherian ring containing $\mbbQ$ and $A$ an $\A{1}$-fibration over $R$. Then the following are equivalent.
		
		\begin{enumerate}
			\item [\rm (I)] $A = R^{[1]}$.
			\item [\rm (II)] $\Omega_R(A)$ is a free $A$-module.
			\item [\rm (III)] $\Omega_R(A)$ is a stably free $A$-module.
			\item [\rm (IV)] There exists $D \in \Der{R}{A}$ such that $D$ is fixed point free.
		\end{enumerate}
	\end{prop}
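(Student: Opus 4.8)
The plan is to prove the cycle of implications (I) $\Rightarrow$ (II) $\Rightarrow$ (III) $\Rightarrow$ (IV) $\Rightarrow$ (I). The first two are immediate: if $A = R[t] = R^{[1]}$ then $\Omega_R(A) = A\,dt$ is free of rank one, and a free module is stably free.

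For (III) $\Rightarrow$ (IV), first apply Theorem \ref{Asanuma_struct-fib-th} to see that $\Omega_R(A)$ is a projective $A$-module of rank one. A projective module of constant rank one that is stably free is free: from an isomorphism $\Omega_R(A) \oplus A^n \cong A^{n+1}$, comparing top exterior powers yields $\Omega_R(A) \cong A$. Write $\Omega_R(A) = A\omega$ with $\omega$ a free generator; then the universal derivation takes the form $d a = \delta(a)\,\omega$ for a uniquely determined $R$-derivation $\delta \in \Der{R}{A}$, and since $\{d a : a \in A\}$ generates $A\omega$ over $A$, the ideal $\delta(A)A$ equals $A$. Thus $\delta$ is a fixed point free element of $\Der{R}{A}$.

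The substance of the proof is (IV) $\Rightarrow$ (I). Given a fixed point free $D \in \Der{R}{A}$, let $\phi_D \in \Hom{A}{\Omega_R(A)}{A}$ be the $A$-linear map corresponding to $D$ under the identification $\Der{R}{A} = \Hom{A}{\Omega_R(A)}{A}$, so that $\phi_D(d a) = D(a)$ for all $a \in A$. Since $\Omega_R(A)$ is generated over $A$ by $\{d a : a \in A\}$, the image of $\phi_D$ is the ideal generated by $D(A)$, namely $D(A)A = A$; hence $\phi_D$ is surjective. As $A$ is free over itself, this surjection splits, so $\Omega_R(A) \cong A \oplus \Ker{\phi_D}$. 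By Theorem \ref{Asanuma_struct-fib-th}, $\Omega_R(A)$ is finitely generated projective of rank one (it is finitely generated since $A$ is a finitely generated $R$-algebra), whence $\Ker{\phi_D}$ is a finitely generated projective $A$-module of rank zero, so $\Ker{\phi_D} = 0$ and $\Omega_R(A) \cong A$. Applying Theorem \ref{Asanuma_struct-fib-th} once more, there is $m \in \mbbN$ with
\[
A^{[m]} = \Sym{R^{[m]}}{\Omega_R(A) \otimes_A R^{[m]}} = \Sym{R^{[m]}}{R^{[m]}} = R^{[m+1]},
\]
and Theorem \ref{Hamann} then gives $A = R^{[1]}$.

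I do not anticipate a serious obstacle. The only points requiring care are the identification of the image of $\phi_D$ with the ideal $D(A)A$ and the observation that a finitely generated projective module of rank zero vanishes; together these convert ``$A$ carries a fixed point free $R$-derivation'' into ``$\Omega_R(A)$ is free'', after which Asanuma's and Hamann's structure theorems close the cycle. Indeed, the common thread running through all four conditions is precisely that $\Omega_R(A)$ --- equivalently, since it is rank one projective, $\Der{R}{A}$ --- is free.
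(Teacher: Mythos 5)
Your proof is correct, but the route you take for the key implication is genuinely different from the paper's. The paper proves (III) $\implies$ (I) and (IV) $\implies$ (I) separately; for (IV) $\implies$ (I) it first assumes $R$ reduced, passes to the total quotient ring $K$ to write $A \otimes_R K = K[U]$, shows by an explicit computation that every $D_1 \in \Der{R}{A}$ is an $A$-multiple of the given fixed point free $D$ (so that $\Der{R}{A} = \Hom{A}{\Omega_R(A)}{A}$ is free of rank one), invokes reflexivity of the projective module $\Omega_R(A)$ to conclude $\Omega_R(A)$ is free, and then needs Lemma \ref{Lem_n-generated_modulo-nilR} and Lemma \ref{Lem_SinglyGeneratedAlg-PolyAlg} to handle non-reduced $R$ by lifting from $R/\nil{R}$. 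You instead observe that the $A$-linear map $\phi_D$ corresponding to $D$ is surjective onto $D(A)A = A$, split the surjection, and kill the rank-zero kernel; this yields $\Omega_R(A) \cong A$ in one stroke, treats reduced and non-reduced $R$ uniformly, and dispenses with the total-quotient-ring computation and the two auxiliary lemmas. Your (III) $\implies$ (IV) via top exterior powers (stably free of rank one implies free, then read off a fixed point free derivation from a free generator of $\Omega_R(A)$) also closes the cycle differently from the paper's (III) $\implies$ (I), which goes through $A^{[n+\ell]} = R^{[n+\ell+1]}$ and Hamann. What the paper's argument buys is the explicit structural fact that a fixed point free derivation generates $\Der{R}{A}$ as a free module; what yours buys is brevity and the elimination of the case distinction on $\nil{R}$. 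Both ultimately funnel through Asanuma's structure theorem and Hamann's theorem to convert freeness of $\Omega_R(A)$ into $A = R^{[1]}$.
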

	\begin{proof}
		\uline{(I) $\implies$ (II)}, \uline{(II) $\implies$ (III)} and \uline{(I) $\implies$ (IV)}: Obvious.
		
		\medskip
		
		\uline{(III) $\implies$ (I)}: Suppose that $\Omega_R(A)$ is a stably free $A$-module. Then, there exists $n \in \mathbb{N}$ such that $\Omega_R(A) \oplus A^n = A^{n+1}$. Since $A$ is an $\A{1}$-fibration over $R$, by Theorem \ref{Asanuma_struct-fib-th}, there exists $\ell \in \mathbb{N}$ such that $A$ is an $R$-subalgebra of $B = R^{[\ell]}$ and $A^{[\ell]} = \Sym{B}{\Omega_R(A) \otimes_A B}$, and therefore, we have $A^{[n+\ell]} = A \otimes_R B^{[n]} = A \otimes_R B \otimes_B B^{[n]} = A^{[\ell]} \otimes_B B^{[n]} = \Sym{B}{(\Omega_R(A) \otimes_A B} \otimes_B B^{[n]} =  \Sym{B}{(\Omega_R(A) \oplus A^n) \otimes_{A} B}$. Now, since $(\Omega_R(A) \oplus A^n) \otimes_{A} B = B^{n+1}$, we essentially have $A^{[n+\ell]} = B^{[n+1]} = R^{[n+\ell +1]}$, and hence, by Theorem \ref{Hamann}, we get $A = R^{[1]}$. 

		\medskip
			
		\uline{(IV) $\implies$ (I)}: Suppose that $D \in \Der{R}{A}$ is fixed point free. Let us assume that $R$ is reduced. Since the total quotient ring $K$ of $R$ is zero-dimensional reduced Noetherian ring, we see that $A \otimes_R K = K^{[1]} = K[U]$, say, for some $U \in A$. Suppose, $D_1 \in \Der{R}{A}$. Letting $D(U) = \alpha \in A$ and $D_1(U) = \beta \in A$, we have $\alpha D_1 = \beta D$. Since $D$ is fixed point free there exists $\alpha_1, \alpha_2, \cdots, \alpha_m \in A$ and $u_1, u_2, \cdots, u_m \in A$ such that $\displaystyle \sum_{i=1}^{m} \alpha_i D(u_i) =1$, and therefore, $\displaystyle \sum_{i=1}^{m} \alpha_i \beta D(u_i) = \beta$. Now, since $\alpha D_1 = \beta D$, we get $\displaystyle \sum_{i=1}^{m} \alpha_i \alpha D_1(u_i) = \beta$, i.e., $\displaystyle \alpha \sum_{i=1}^{m} \alpha_i D_1(u_i) = \beta$. This shows that $\alpha D_1 = \displaystyle \alpha \sum_{i=1}^{m} \alpha_i D_1(u_i) D$.
		
		\medskip
		Let $\tilde{D}: A \otimes_R K \longrightarrow A \otimes_R K$ be the extension of $D$. Clearly, $\tilde{D}$ is fixed point free. Since $A \otimes_R K = K[U]$, we have $\tilde{D}(U) = D(U) = \alpha \in K^*$, i.e., $\alpha$ is a non-zero divisor in $R$. Since $A$ is flat over $R$, $\alpha$ remains a non-zero divisor in $A$, and therefore, $D_1 = \displaystyle \sum_{i=1}^{m} \alpha_i D_1(u_i) D$. This proves that $\Der{R}{A} = \Hom{A}{\Omega_R(A)}{A}$ is a free $A$-module of rank one. Since $\Omega_R(A)$ is a projective $A$-module, it is a reflexive $A$-module, and therefore, $\Omega_R (A)$ is a free $A$-module. Consequently, by ``(II) $\implies$ (I)'', we get $A = R^{[1]}$.
		
		\medskip
		Now, we suppose that $R$ is not reduced. Set $\eta := \text{Nil}(R)$. Clearly, the induced $R/\eta$-derivation $\overline{D}: A/\eta A \longrightarrow A/\eta A$ is fixed point free. Since $A/\eta A$ is an $\A{1}$-fibration over $R/\eta $, from the previous discussion we have $A/\eta A = (R/\eta)^{[1]} = (R/\eta)[X]$, say, and therefore, by Lemma \ref{Lem_n-generated_modulo-nilR}, we get $A = R[X]$. Finally, due to Lemma \ref{Lem_SinglyGeneratedAlg-PolyAlg}, it follows that $A = R[X] = R^{[1]}$.
	\end{proof}

	\section{Structure of $\A{2}$-fibrations having fixed point free LNDs} \label{Sec_Main-Results}
	First, we note the following lemmas. The first one is easy to prove. 
	\begin{lem} \label{Lem_Der-induced-by-Retraction}
		Let $C \subseteq B$ be algebras over a ring $R$ with a retraction $\phi: B \longrightarrow C$. Suppose $\tilde{D}: B \longrightarrow B$ is an $R$-derivation. Then, $D: = (\phi \circ \tilde{D})|_C: C \longrightarrow C$ is an $R$-derivation.
	\end{lem}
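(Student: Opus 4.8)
The plan is to verify directly the two defining properties of an $R$-derivation for the map $D := (\phi \circ \tilde D)|_C$, namely $R$-linearity and the Leibniz rule. First I would observe that $D$ does land in $C$: by construction $\tilde D(C) \subseteq B$ and $\phi(B) = C$, so $D(c) = \phi(\tilde D(c)) \in C$ for every $c \in C$; moreover $D$ is $R$-linear since it is a composite of $R$-linear maps $\tilde D$, the inclusion $C \hookrightarrow B$, and the $R$-algebra map $\phi$. So the only substantive point is the Leibniz rule.

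For the Leibniz rule, take $c, c' \in C \subseteq B$. Since $\tilde D$ is an $R$-derivation of $B$ we have $\tilde D(cc') = c\,\tilde D(c') + c'\,\tilde D(c)$ in $B$. Applying the ring homomorphism $\phi$ and using that $\phi$ fixes elements of $C$ (so $\phi(c) = c$, $\phi(c') = c'$), we get
\[
D(cc') = \phi(\tilde D(cc')) = \phi(c)\,\phi(\tilde D(c')) + \phi(c')\,\phi(\tilde D(c)) = c\,D(c') + c'\,D(c),
\]
as required. Thus $D \in \Der{R}{C}$.

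There is really no obstacle here; the lemma is purely formal, and the only thing one must be slightly careful about is that the retraction property $\phi|_C = \mathrm{id}_C$ is what turns $\phi(c)\phi(\tilde D(c'))$ into $c\,D(c')$ rather than something lying in a larger ring. The content of this lemma lies not in its proof but in its later use: it is the mechanism by which a derivation on the ambient polynomial ring $R^{[n]}$ (into which an affine fibration embeds as a retract by Theorem \ref{Asanuma_struct-fib-th}) restricts to a derivation on the fibration itself.
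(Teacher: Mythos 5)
Your proof is correct and is exactly the direct verification the paper has in mind (the paper omits the argument, remarking only that the lemma "is easy to prove"). The key point you correctly isolate — that $\phi|_C = \mathrm{id}_C$ converts $\phi(c)\,\phi(\tilde D(c'))$ into $c\,D(c')$ in the Leibniz computation — is precisely where the retraction hypothesis is used, and nothing more is needed.
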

	
	\begin{lem} \label{Lem_A=C[W]_implies_W-Residual}
		Let $C$, $A$ be algebras over a Noetherian ring $R$ containing $\mbbQ$ such that $A$ is an $\A{2}$-fibration over $R$ and $A = C[W] = C^{[1]}$. Then, $C$ is an $\A{1}$-fibration over $R$ and $A$ is an $\A{1}$-fibration over $R[W]$.
	\end{lem}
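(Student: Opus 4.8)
The plan is to verify, one condition at a time, that $C$ is an $\A{1}$-fibration over $R$ (finite type, flatness, and the residue-field fibre condition), and then to read off the statement for $A$ over $R[W]$ by base change along $R\hookrightarrow R[W]$ --- equivalently, by observing that $W$ is a $1$-tuple residual variable of $A$ and invoking Theorem \ref{DD_fib}.

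\emph{$C$ is an $\A{1}$-fibration over $R$.} First, the $R$-algebra homomorphism $\phi\colon A=C[W]\to C$ with $\phi(W)=0$ is a retraction, so $C\cong A/\Ker{\phi}$ is a finitely generated $R$-algebra, being a quotient of the finitely generated $R$-algebra $A$. Next, the decomposition $A=\bigoplus_{i\ge 0}CW^{i}$ of $A$ as an $R$-module shows that $C$ is an $R$-module direct summand of the flat $R$-module $A$, hence $C$ is $R$-flat. Finally, for the fibre condition fix $P\in\Spec{R}$ and tensor this decomposition with $k(P)$ over $R$ to get $A\otimes_R k(P)=(C\otimes_R k(P))[\bar W]=(C\otimes_R k(P))^{[1]}$; on the other hand $A\otimes_R k(P)=k(P)^{[2]}$ by hypothesis. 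Thus $C\otimes_R k(P)$ is a finitely generated $k(P)$-algebra (necessarily a domain, as it embeds in $k(P)^{[2]}$) whose polynomial extension in one variable is $k(P)^{[2]}$, and the Abhyankar--Eakin--Heinzer cancellation theorem yields $C\otimes_R k(P)=k(P)^{[1]}$. Hence $C$ is an $\A{1}$-fibration over $R$.

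\emph{$A$ is an $\A{1}$-fibration over $R[W]$.} The multiplication map gives an $R[W]$-algebra isomorphism $A=C[W]\cong C\otimes_R R[W]$; since $C$ is finitely generated and flat over $R$, it follows that $A$ is finitely generated and flat over $R[W]$. For $Q\in\Spec{R[W]}$ with $P:=Q\cap R$, the field $k(Q)$ is naturally a $k(P)$-algebra (because $P\subseteq Q$ while $R\setminus P$ maps into $k(Q)^*$), so $A\otimes_{R[W]}k(Q)=C\otimes_R k(Q)=(C\otimes_R k(P))\otimes_{k(P)}k(Q)=k(P)^{[1]}\otimes_{k(P)}k(Q)=k(Q)^{[1]}$, using the fibre computation of the previous step. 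Hence $A$ is an $\A{1}$-fibration over $R[W]$. Alternatively, $W$ is transcendental over $C$ (as $A=C^{[1]}$), hence algebraically independent over $R$, and the identity $A\otimes_R k(P)=(R[W]\otimes_R k(P))^{[1]}$ (immediate from the two displays of the previous step) says precisely that $W$ is a $1$-tuple residual variable of $A$, so Theorem \ref{DD_fib} applies.

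The only step carrying genuine content is the fibrewise cancellation $(C\otimes_R k(P))^{[1]}=k(P)^{[2]}\Rightarrow C\otimes_R k(P)=k(P)^{[1]}$; the rest is a routine check of the fibration axioms together with a base-change computation, which I do not expect to present any obstacle. One point worth keeping in mind is that $C$ itself need not be a domain when $R$ is not, so the cancellation has to be applied on each fibre, where the ambient ring $k(P)^{[2]}$ is genuinely a domain and the classical theorem is available.
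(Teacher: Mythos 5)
Your proof is correct and follows essentially the same route as the paper: finite generation of $C$ via the retraction, flatness via the direct-summand decomposition, fibrewise cancellation to get $C\otimes_R k(P)=k(P)^{[1]}$, and then the observation that $W$ is a residual variable together with Theorem \ref{DD_fib}. The only cosmetic differences are that the paper invokes Hamann's theorem (Theorem \ref{Hamann}) rather than Abhyankar--Eakin--Heinzer for the cancellation on fibres, and that your direct base-change verification of the second claim is an (equally valid) alternative the paper does not spell out.
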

	\begin{proof}
		Clearly, $C$ is a finitely generated $R$-subalgebra of $A$, and further, $C$, being a direct summand of the flat $R$-module $A$, is flat over $R$. Let $P \in \Spec{R}$. Now, $k(P)^{[2]} = A \otimes_R k(P) = (C \otimes_R k(P))^{[1]}$, and therefore, by Theorem \ref{Hamann}, we get $C \otimes_R k(P) = k(P)^{[1]}$. This shows that $C$ is an $\A{1}$-fibration over $R$. Again, as $C \otimes_R k(P) = k(P)^{[1]}$, we see that $A \otimes_R k(P) = (C \otimes_R k(P))[W] = (R[W] \otimes_R k(P))^{[1]}$. This proves that $W$ is a residual variable of $A$, and therefore, by Theorem \ref{DD_fib}, $A$ is an $\A{1}$-fibration over $R[W]$. This completes the proof.
	\end{proof}

		As a consequence of Proposition \ref{Kahoui_fpf-LND_A1-fib} we observe the following special case of Theorem A, as well as Theorem \ref{Kahoui-A2-fib_trivial_criterion}; the technique provides an independent short proof to it.
	
	\begin{prop}\label{Lem_fpf-LND-on-Sym_Ker_fg-flat}
		Let $R$ be a Noetherian ring containing $\mbbQ$ and $A = \Sym{R}{M}$ for some finitely generated rank two projective $R$-module $M$. Suppose, $D: A \longrightarrow A$ is a fixed point free R-$LND$, then $\KerD = \Sym{R}{N}$ for some finitely generated rank one projective $R$-module $N$ and $A = \KerD^{[1]}$.
	\end{prop}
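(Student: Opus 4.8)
The plan is to use the hypothesis $A = \Sym{R}{M}$ to reduce, Zariski-locally on $\Spec{R}$, to the trivial case $A = R^{[2]}$, where van den Essen's Theorem \ref{Essen_FixdPtLND-R^[2]-trivKer} applies, and then to globalise. First I would pick $g_1, \dots, g_r \in R$ with $(g_1, \dots, g_r) = R$ such that $M_{g_i}$ is free of rank two over $R_{g_i}$ for each $i$; then $A_{g_i} = \Sym{R_{g_i}}{M_{g_i}} = R_{g_i}^{[2]}$. The induced map $D_{g_i} := D \otimes_R R_{g_i}$ is a fixed point free $R_{g_i}$-LND of $R_{g_i}^{[2]}$ (localisation preserves local nilpotence, and $1 \in D(A)A$ localises to $1 \in D_{g_i}(A_{g_i})A_{g_i}$). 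Hence by Theorem \ref{Essen_FixdPtLND-R^[2]-trivKer}, $\Ker{D_{g_i}} = R_{g_i}^{[1]}$ and $A_{g_i} = \Ker{D_{g_i}}^{[1]}$. Since localisation is flat, $\Ker{D_{g_i}} = (\KerD)_{g_i}$; writing $C := \KerD$ this yields $C_{g_i} = R_{g_i}^{[1]}$ and $A_{g_i} = C_{g_i}^{[1]}$ for every $i$.

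Next I would assemble the global picture from these local statements. From $C_{g_i} = R_{g_i}^{[1]}$ together with $(g_1, \dots, g_r) = R$, standard gluing shows that $C$ is a finitely generated $R$-algebra (hence Noetherian and finitely presented over $R$), that $C$ is flat over $R$, and that $C \otimes_R k(P) = k(P)^{[1]}$ for all $P \in \Spec{R}$; so $C$ is an $\A{1}$-fibration over $R$, and it is locally on $\Spec{R}$ a polynomial ring in one variable, i.e.\ $C_P \cong \Sym{R_P}{R_P}$ for each $P$. By Theorem \ref{BCW_Sym}, $C = \Sym{R}{N}$ for some finitely presented $R$-module $N$; and since $C$ is an $\A{1}$-fibration, $\Omega_R(C) = N \otimes_R C$ is projective of rank one over $C$ by Theorem \ref{Asanuma_struct-fib-th}. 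As $R \hookrightarrow C$ is split (by the augmentation) and $C$ is flat over $R$, this inclusion is faithfully flat, and faithfully flat descent forces $N$ to be a finitely generated rank one projective $R$-module. (Once $A = C^{[1]}$ is obtained below one also sees directly that $\Omega_R(C)$ is extended from $R$, so Corollary \ref{Cor_A1-fib_Triv_DiffMod_Extended} gives the same conclusion.)

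It remains to show $A = C^{[1]}$, i.e.\ that $D$ has a slice, and for this I would apply Proposition \ref{Kahoui_fpf-LND_A1-fib} with base ring $C$. The spectra $\Spec{C_{g_i}} = \Spec{C \otimes_R R_{g_i}}$ cover $\Spec{C}$ (because $(g_i)C = C$), and on each of them $A_{g_i} = C_{g_i}^{[1]}$; hence $A$ is finitely generated over $C$, flat over $C$, and satisfies $A \otimes_C k(\mathfrak q) = k(\mathfrak q)^{[1]}$ for every $\mathfrak q \in \Spec{C}$, i.e.\ $A$ is an $\A{1}$-fibration over the Noetherian $\mbbQ$-algebra $C$. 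Since $D(C) = 0$, the derivation $D$ is a $C$-derivation of $A$ and it is fixed point free, so Proposition \ref{Kahoui_fpf-LND_A1-fib} gives $A = C^{[1]}$. Together with $C = \Sym{R}{N}$ this gives $\KerD = \Sym{R}{N}$ and $A = \KerD^{[1]}$.

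Beyond quoting the results above, the only real work is the globalisation in the second and third paragraphs: checking that finite generation, flatness, the fibre condition, and (for $N$) projectivity all descend along the cover $\{\Spec{R_{g_i}}\}$, and that the induced cover of $\Spec{C}$ is one over which $A$ becomes trivial. I expect this bookkeeping, rather than any conceptual difficulty, to be the point that needs most care; the substantive input is simply that $A = \Sym{R}{M}$ is Zariski-locally $R^{[2]}$, which puts van den Essen's theorem at our disposal.
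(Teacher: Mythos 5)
Your proposal is correct and follows essentially the same route as the paper: localize so that $M$ becomes free and van den Essen's Theorem \ref{Essen_FixdPtLND-R^[2]-trivKer} applies, then globalize the kernel via Theorem \ref{BCW_Sym} and obtain the slice from Proposition \ref{Kahoui_fpf-LND_A1-fib}. The only differences are bookkeeping: you work with a finite cover by basic open sets and glue, whereas the paper localizes at primes, applies Theorem \ref{BCW_Sym} once more to write $A = \Sym{\KerD}{L}$, and deduces finite generation of $\KerD$ from its being a retract of $A$.
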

	\begin{proof}
		By Theorem \ref{Essen_FixdPtLND-R^[2]-trivKer} we see that $A_P = \KerD_P^{[1]}$ and $\KerD_P = R_P^{[1]}$ for all $P \in \Spec{R}$. This shows that $A_Q = \KerD_Q^{[1]}$ for all $Q \in \Spec{\KerD}$, and therefore, by Theorem \ref{BCW_Sym}, we have $A = \Sym{\KerD}{L}$ for some finitely generated rank one projective $\KerD$-module $L$, which proves that $\KerD$, being a retract of the finitely generated $R$-algebra $A$, is a finitely generated $R$-subalgebra of $A$. Since $\KerD_P = {R_P}^{[1]}$ for all $P \in \Spec{R}$, by Theorem \ref{BCW_Sym}, $\KerD = \Sym{R}{N}$ for some rank one projective $R$-module $N$. Moreover, since $A = \Sym{\KerD}{L}$, by Proposition \ref{Kahoui_fpf-LND_A1-fib}, we see that $A =\KerD^{[1]}$.
	\end{proof}
	
	We now prove our main result (Theorem A).
	
	\begin{thm} \label{Thm_A2-fib_Triv_FPF-LND}
		Let $R$ be a Noetherian ring containing $\mbbQ$ and $A$ an $\A{2}$-fibration over $R$. Suppose, $D: A \longrightarrow A$ is a fixed point free $R$-LND. Then, $\KerD$ is an $\A{1}$-fibration over $R$ and $A = \KerD^{[1]}$. Further, if $\Omega_R(A)$ is extended from $R$, specifically, when $R$ is seminormal, then $\KerD = {\Sym{R}{N}}$ for some finitely generated rank one project $R$-module $N$.
	\end{thm}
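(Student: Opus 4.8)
The plan is to use Asanuma's structure theorem (Theorem \ref{Asanuma_struct-fib-th}) to embed $A$ as a retract of a polynomial ring $R^{[n]}$, transport the fixed point free LND $D$ there (so that I can appeal to the trivial case, Theorem \ref{Essen_FixdPtLND-R^[2]-trivKer}, after localizing), and then descend the triviality statement $A = \KerD^{[1]}$ from the stalks to $R$ using the local-global machinery for symmetric algebras (Theorem \ref{BCW_Sym}) together with Hamann's cancellation theorem (Theorem \ref{Hamann}) and Proposition \ref{Kahoui_fpf-LND_A1-fib}.

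First I would reduce to the reduced case exactly as in the proof of Proposition \ref{Kahoui_fpf-LND_A1-fib}: setting $\eta = \Nil{R}$, the induced $R/\eta$-LND $\ol{D}$ on $A/\eta A$ is still fixed point free, and once I know $A/\eta A = \KerD\!{}^{[1]}$ with $\KerD$ an $\A{1}$-fibration over $R/\eta$, Lemma \ref{Lem_n-generated_modulo-nilR} lifts a generating set so that $A = C[W]$ for a suitable $C \subseteq A$, and then Lemma \ref{Lem_A=C[W]_implies_W-Residual} gives that $C$ is an $\A{1}$-fibration over $R$. So assume $R$ reduced. By Theorem \ref{Asanuma_struct-fib-th} there is $n$ with $A \hookrightarrow B := R^{[n]}$, a retraction $\phi\colon B \to A$, and $A^{[m]} = \Sym{B}{\Omega_R(A)\otimes_A B}$. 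The key pointwise input is Theorem \ref{Essen_FixdPtLND-R^[2]-trivKer}: for every $P \in \Spec{R}$ the localized derivation $D_P$ on $A_P = k(P)^{[2]}$-type fibre — more precisely on $A_P$ which is an $\A{2}$-fibration over $R_P$ — need not have $A_P$ polynomial over $R_P$ a priori, so here is the first real subtlety. I expect the cleanest route is to first prove $\KerD$ is finitely generated and an $\A{1}$-fibration over $R$ by checking $\KerD \otimes_R k(P) = k(P)^{[1]}$ fibrewise (using that $D$ fixed point free implies $D_P \ne 0$, hence $\ol{D_P}$ is a nonzero fixed point free LND on the $\A{2}$-fibration $A_P\otimes_{R_P} k(P) = k(P)^{[2]}$, so its kernel is $k(P)^{[1]}$ by the field case, and then using flatness/base change for kernels of LNDs over $\mbbQ$-algebras), and simultaneously that $A_P = \KerD_P\!{}^{[1]}$.

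Granting that $A_P$ is $R_P$-isomorphic to the symmetric algebra of the rank-one projective $\Omega_{R_P}(\KerD_P)$ for each $P$ — equivalently $A_Q = \KerD_Q\!{}^{[1]}$ for each $Q \in \Spec{\KerD}$ — Theorem \ref{BCW_Sym} applied over $\KerD$ yields $A = \Sym{\KerD}{L}$ for a finitely generated rank-one projective $\KerD$-module $L$; in particular $\KerD$ is a retract of the finitely generated $R$-algebra $A$ and hence itself finitely generated over $R$. Applying Theorem \ref{BCW_Sym} over $R$ to $\KerD$ (whose stalks are $R_P^{[1]}$) would then need to be replaced by the weaker statement that $\KerD$ is an $\A{1}$-fibration over $R$ — which is what we want — and this follows from the fibrewise computation above together with flatness of $\KerD$ over $R$ (it is a direct summand of $A$). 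Finally, from $A = \Sym{\KerD}{L}$ and Proposition \ref{Kahoui_fpf-LND_A1-fib} (applied with the base ring $\KerD$, using that $D$ restricts to or induces a fixed point free $\KerD$-derivation of $A$, or more simply that $A$ has a fixed point free $\KerD$-LND because $D$ itself is one) we conclude $A = \KerD\!{}^{[1]}$. The last sentence is then immediate: if $\Omega_R(A)$ is extended from $R$ then so is $\Omega_R(\KerD)$ (a direct summand, by Theorem \ref{DD_fib} since the slice coordinate is a residual variable), so Corollary \ref{Cor_A1-fib_Triv_DiffMod_Extended} gives $\KerD = \Sym{R}{N}$ for a rank-one projective $N$; the seminormal case is the special case where extendedness is automatic via Theorem \ref{Swan_SemiNormality}.

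The main obstacle I anticipate is the fibrewise step: showing that a fixed point free $R$-LND localizes to a fixed point free $R_P$-LND whose further reduction mod $P$ is a nonzero fixed point free LND of $k(P)^{[2]}$, and — crucially — that forming $\KerD$ commutes with the base changes $R \to R_P$ and $R \to k(P)$. Commutation with localization is standard, but commutation with $\otimes_R k(P)$ for kernels of LNDs is exactly the kind of statement that can fail without flatness hypotheses; here I would lean on the fact that $\KerD$ is a direct summand of $A$ as an $R$-module (so the base change is exact) combined with the slice theorem once a slice is produced stalk-locally. Getting the order of these reductions right — produce slices stalk-locally first, deduce $\KerD$ is a well-behaved $\A{1}$-fibration, then globalize via Bass–Connell–Wright — is the delicate bookkeeping that the proof will have to handle carefully.
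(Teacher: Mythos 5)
Your plan founders at the step you yourself flag as ``the first real subtlety,'' and the workaround you sketch does not close the gap. You want to invoke Theorem \ref{Essen_FixdPtLND-R^[2]-trivKer} after localizing at $P \in \Spec{R}$, but $A_P$ is only an $\A{2}$-fibration over $R_P$; whether such a fibration over a local Noetherian ring is trivial is precisely the (open) Dolgachev--Weisfeiler problem, so the hypothesis $A_P = R_P^{[2]}$ is unavailable. Your fallback --- pass to the fibres $A \otimes_R k(P) = k(P)^{[2]}$ and use the field case --- requires that forming $\KerD$ commute with $-\otimes_R k(P)$, and the justification you offer (that $\KerD$ is an $R$-module direct summand of $A$) is circular: that splitting is exactly what you obtain once $D$ has a slice, i.e.\ once $A = \KerD^{[1]}$ is already known. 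Without it you cannot conclude $\KerD \otimes_R k(P) = \Ker{\ol{D}}$, nor that $\KerD$ is finitely generated or flat over $R$, so your subsequent appeal to Theorem \ref{BCW_Sym} over $\KerD$ has no verified hypotheses.

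The paper's proof avoids localization at primes of $R$ entirely. It base-changes along the faithfully flat extension $R \to B = R^{[n]}$ furnished by Theorem \ref{Asanuma_struct-fib-th}, for which $A \otimes_R B = \Sym{B}{\Omega_R(A)\otimes_A B}$ \emph{is} a symmetric algebra of a rank-two projective $B$-module; there Proposition \ref{Lem_fpf-LND-on-Sym_Ker_fg-flat} applies to the extended derivation $D \otimes 1$, whose kernel is $\KerD \otimes_R B$ by flatness (flat base change of kernels is the legitimate commutation, unlike $-\otimes_R k(P)$). This yields $A^{[n]} = (\KerD \otimes_R B)^{[1]} = \KerD^{[n+1]}$, and Theorem \ref{Hamann} cancels this down to $A = \KerD^{[1]}$; Lemma \ref{Lem_A=C[W]_implies_W-Residual} then shows $\KerD$ is an $\A{1}$-fibration over $R$. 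If you want to salvage your outline, replace ``localize at $P$'' by ``tensor with $R^{[n]}$'' and route the pointwise use of Theorem \ref{Essen_FixdPtLND-R^[2]-trivKer} through primes of $B$, where the projective module is free --- which is exactly what Proposition \ref{Lem_fpf-LND-on-Sym_Ker_fg-flat} does. Your treatment of the final clause is also unjustified: extendedness of $\Omega_R(\KerD)$ does not follow from $\Omega_R(A)$ being extended merely because the former sits as a ``direct summand''; the paper instead computes $A^{[n]} = \Sym{R}{M}^{[n]}$, localizes to get $\KerD_P^{[n+1]} = R_P^{[n+2]}$, cancels with Hamann to obtain $\KerD_P = R_P^{[1]}$, and only then applies Theorem \ref{BCW_Sym}.
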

	\begin{proof}
		Since $A$ is an $\A{2}$-fibration over $R$, by Theorem \ref{Asanuma_struct-fib-th}, there exists $B = R^{[n]}$ such that $A \subseteq B$ and $A^{[n]} = A \otimes_R B = \Sym{B}{\Omega_R(A) \otimes_A B}$ where $\Omega_R(A)$ is a finitely generated projective $A$-module of rank two. Let $\tilde{D} := D \otimes 1: A \otimes_R B \longrightarrow A \otimes_R B$ be the trivial extension of $D$. Note that $\tilde{D}$ is fixed point free and $\Ker{\tilde{D}} = \KerD \otimes_R B$. Since $\Omega_R(A)$ is a projective $A$-module, $\Omega_R(A) \otimes_A B$ is a projective $B$-module, and therefore, applying Proposition \ref{Lem_fpf-LND-on-Sym_Ker_fg-flat}, we get $\KerD \otimes_R B = \Sym{B}{L}$ for some finitely generated rank one projective $B$-module $L$ and $A \otimes_R B = (\KerD \otimes_R B)^{[1]}$. Since $B= R^{[n]}$, we have $A^{[n]} = A \otimes_R B = (\KerD \otimes_R B)^{[1]} = \KerD^{[n+1]}$, and therefore, by Theorem \ref{Hamann}, we have $A = \KerD^{[1]}$. Finally, using Lemma \ref{Lem_A=C[W]_implies_W-Residual} we see that $\KerD$ is an $\A{1}$-fibration over $R$. 
	
		\medskip
		
		Now, we assume that $\Omega_R(A)$ is extended from $R$, i.e., $\Omega_R(A) = M \otimes_R A$ for some $R$-module $M$. Since $\Omega_R(A)$ is a projective $A$-module of rank two and $A$ is faithfully flat over $R$, due to faithful descent property of finite projective module we see that $M$ is a rank two projective $R$-module. Since $A$ is an $\A{2}$-fibration over $R$, from earlier arguments we have $A \subseteq R^{[n]}$ and
		
		$$
		\begin{array}{ll}
		A^{[n]} & = \Sym{R^{[n]}}{\Omega_R(A) \otimes_{A} R^{[n]}}\\
		 ~ & = \Sym{R^{[n]}}{(M \otimes_{R} A) \otimes_A R^{[n]}}\\
		 ~ & = \Sym{R}{M}  \otimes_R R^{[n]}\\
		 ~ & = \Sym{R}{M}^{[n]}
		\end{array}
		$$
		
Thus, we have $\KerD^{[n+1]} = A^{[n]} = \Sym{R}{M}^{[n]}$, and therefore, for each $P \in \Spec{R}$, we get $\KerD_P^{[n+1]} = (\Sym{R}{M})_P = R_P^{[n+2]}$, from which, by Theorem \ref{Hamann}, we see that $\KerD_P = R_P^{[1]}$. Now, applying Theorem \ref{BCW_Sym}, we have $\KerD = \Sym{R}{N}$ for some rank one projective $R$-module $N$. When $R$ is seminormal, the result follows directly from Corollary \ref{Cor_A1-fib_Triv_DiffMod_Extended}, as $\KerD$ is an $\A{1}$-fibration over $R$.
	\end{proof}

As a consequence of Theorem \ref{Thm_A2-fib_Triv_FPF-LND} we observe the following characterization of $\A{2}$-fibrations that are polynomial algebras over a subalgebra of it.

\begin{cor} \label{Cor_Structure_A2-Fib_Equivalences}
	Let $R$ be a Noetherian ring containing $\mbbQ$ and $A$ an $\A{2}$-fibration over $R$. Then, the following statements are equivalent.
	\begin{enumerate}
		\item [\rm (I)] $A$ has a fixed point free $R$-LND.
		\item [\rm (II)] $A$ has an $R$-LND with slice.
		\item [\rm (III)] $A = C[W] = C^{[1]}$ for some $R$-subalgebra $C$ of $A$.
			
		\item [\rm (IV)] $A = C[W] = C^{[1]}$ where $C \subseteq A$ is an $\A{1}$-fibration over $R$.
		\item [\rm (V)]$A$ is an $\A{1}$-fibration over $R[W] = R^{[1]}$ where $W \in A$ and there exists $B = R[W][U_1, U_2, \cdots, U_n] = R[W]^{[n]}$, for some $n \in \mbbN$, along with a retraction $\phi: B \longrightarrow A$ such that $\partial_W (\phi(U_i)) =0$.
		\end{enumerate}
\end{cor}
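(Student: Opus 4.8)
The plan is to prove $(I)$–$(V)$ equivalent by running the cycle $(I)\Rightarrow(II)\Rightarrow(III)\Rightarrow(IV)\Rightarrow(V)\Rightarrow(I)$; four of these links are quick deductions from material already in hand, and the genuine content sits in the last implication.

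For $(I)\Rightarrow(II)$, given a fixed point free $R$-LND $D$ of $A$, Theorem~\ref{Thm_A2-fib_Triv_FPF-LND} gives $A=\KerD^{[1]}$; a fixed point free derivation has image in no proper principal ideal, so it is irreducible, and the slice theorem then furnishes a slice for $D$, giving $(II)$. For $(II)\Rightarrow(III)$, if $D$ is an $R$-LND with slice $s$ the slice theorem yields $A=\KerD[s]=\KerD^{[1]}$, so one takes $C:=\KerD$ and $W:=s$. For $(III)\Rightarrow(IV)$, since $A$ is an $\A{2}$-fibration over $R$ and $A=C[W]=C^{[1]}$, Lemma~\ref{Lem_A=C[W]_implies_W-Residual} says this same $C$ is already an $\A{1}$-fibration over $R$ (and, incidentally, that $A$ is an $\A{1}$-fibration over $R[W]$).

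For $(IV)\Rightarrow(V)$: with $A=C[W]=C^{[1]}$ and $C\subseteq A$ an $\A{1}$-fibration over $R$, note $R[W]=R^{[1]}$ (as $W$ is transcendental over $C\supseteq R$) and, again by Lemma~\ref{Lem_A=C[W]_implies_W-Residual}, $A$ is an $\A{1}$-fibration over $R[W]$. By Theorem~\ref{Asanuma_struct-fib-th} applied to $C$ over $R$, $C$ is a retract of some $R[U_1,\dots,U_n]=R^{[n]}$, via $\psi\colon R^{[n]}\to C$ with $\psi|_C=\mathrm{id}$. Put $B:=R[W][U_1,\dots,U_n]=R[W]^{[n]}$ (which contains $A=C[W]$) and extend $\psi$ to $\phi\colon B\to A$ by $\phi|_{R^{[n]}}=\psi$, $\phi(W)=W$; then $\phi$ is a retraction, and $\phi(U_i)=\psi(U_i)\in C$ lies in $\Ker{\partial_W}$, so $\partial_W(\phi(U_i))=0$, which is $(V)$.

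The main obstacle is $(V)\Rightarrow(I)$. Write $c_i:=\phi(U_i)\in A$; since the retraction $\phi$ is onto and fixes $W\in A$, we get $A=\phi(B)=R[W,c_1,\dots,c_n]$, i.e. $A=C[W]$ with $C:=R[c_1,\dots,c_n]$. Composing the $R$-LND $\partial_W$ of $B$ (with $\partial_W(W)=1$, $\partial_W(U_i)=0$) with $\phi$ and restricting, Lemma~\ref{Lem_Der-induced-by-Retraction} yields an $R$-derivation $D:=(\phi\circ\partial_W)|_A$ of $A$ with $D(W)=1$ and $D(c_i)=\phi(\partial_W(c_i))=\phi(0)=0$, hence $D|_C=0$. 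The crux is to show $W$ is transcendental over $C$, i.e. $A=C[W]=C^{[1]}$: if some nonzero $f=\sum_{j=0}^{d}a_jT^j\in C[T]$ of least degree $d$ had $\sum a_jW^j=0$, applying $D$ (using $D|_C=0$, $D(W)=1$) gives $\sum_{j\ge1}ja_jW^{j-1}=0$; since $\mathbb{Q}\subseteq R$ makes $d$ invertible, $da_d\ne0$, so this is a nonzero relation of strictly smaller degree — a contradiction ($d=0$ is impossible since $C\hookrightarrow A$). Once $A=C^{[1]}$ is established, $D$ is just the partial derivative $\partial_W$ of this polynomial ring, an $R$-LND with slice $W$, in particular fixed point free, which is $(I)$ and closes the cycle. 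Aside from this transcendence argument, every step is a routine application of the slice theorem and of Theorems~\ref{Asanuma_struct-fib-th} and \ref{Thm_A2-fib_Triv_FPF-LND} together with Lemmas~\ref{Lem_Der-induced-by-Retraction} and \ref{Lem_A=C[W]_implies_W-Residual}.
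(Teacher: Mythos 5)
Your proposal is correct and takes essentially the same route as the paper: the same chain of implications, resting on Theorem~\ref{Thm_A2-fib_Triv_FPF-LND}, Lemma~\ref{Lem_A=C[W]_implies_W-Residual}, Asanuma's structure theorem (Theorem~\ref{Asanuma_struct-fib-th}), and the derivation $D=(\phi\circ\partial_W)|_A$ induced by the retraction as in Lemma~\ref{Lem_Der-induced-by-Retraction}. The only (harmless) variation is in the implication out of (V): where the paper verifies local nilpotency of $D$ directly by the degree computation $D^{m+1}(f)=0$, you first deduce $A=C[W]=C^{[1]}$ with $C=R[\phi(U_1),\dots,\phi(U_n)]\subseteq\Ker{D}$ via a transcendence argument and then identify $D$ with $\partial_W$ --- an equivalent repackaging of the same facts.
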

	
	\begin{proof}
		\underline{(I) $\Longleftrightarrow$ (II) $\implies$ (III):} Follows from Theorem \ref{Thm_A2-fib_Triv_FPF-LND}.
		
		\medskip
		\underline{(III) $\implies$ (IV):} Follows from Lemma \ref{Lem_A=C[W]_implies_W-Residual} 
		\medskip
		
		\underline{(IV) $\implies$ (II):} Since $A = C[W] = C^{[1]}$, $A$ has a $C$-LND $D$ with a slice, and therefore, $D$ is an $R$-LND of $A$ with a slice.
		
		\medskip
		\underline{(IV) $\implies$ (V):} From Lemma \ref{Lem_A=C[W]_implies_W-Residual} it follows that $A$ is an $\A{1}$-fibration over $R[W]$. Now, since $C$ is an $\A{1}$-fibration over $R$, by Theorem \ref{Asanuma_struct-fib-th}, there exists $B' = R[U_1, U_2, \cdots, U_{n}] = R^{[n]}$ for some $n \in \mbbN$ along with a retraction $\phi_1: B' \longrightarrow C$, which induces a retraction $\phi: B \longrightarrow C[W] = A$ such that $\phi|_{B'} = \phi_1$ where $B = B'[W]$. Clearly, $\partial_W(\phi(U_i)) = 0$ for all $i = 1, 2, \cdots, n$.
		
		\medskip
		\underline{(V) $\implies$ (II):} 
		Set $D : = (\phi \circ \partial_W)|_A: A \longrightarrow A$. By Lemma \ref{Lem_Der-induced-by-Retraction} it follows that $D$ is an $R$-derivation of $A$. We shall show that $D$ is an $R$-LND with slice $W$. Clearly, $D(W) =1$.
		
		\medskip
		Let $\alpha(\ul{U}) \in A \cap R[U_1, U_2, \cdots, U_n]$. Note that $\phi(\alpha(\ul{U})) = \alpha(\ul{U})$. One may check that
		\begin{eqnarray}
		D(\alpha(\ul{U})) &=& 0\\
		\text{and} & ~ &\nonumber\\ D^i(\alpha(\ul{U})W^m) &=& m(m-1) \cdots (m-i+1)\ \alpha(\ul{U})W^{m-i} \ \ \text{for all $i =1, 2, \cdots, m$}
		\end{eqnarray}
		
		Let $f \in A$. Then, $f = \alpha_0(\ul{U}) + \alpha_1(\ul{U}) W + \alpha_2(\ul{U}) W^2 + \cdots + \alpha_m (\ul{U}) W^m$ for some $\alpha_i(\ul{U})$'s in $R[U_1, U_2, \cdots, U_n]$, and therefore, $f = \phi(f) = \phi(\alpha_0(\ul{U})) + \phi(\alpha_1(\ul{U})) W + \phi(\alpha_2(\ul{U})) W^2 + \cdots + \phi(\alpha_m (\ul{U})) W^m$. Now, using (1) and (2) we see that $D^{m+1}(f) =0$. This shows that $D$ is an $R$-LND of $A$ with slice $W$.
	\end{proof}
	
One should note that Corollary \ref{Cor_Structure_A2-Fib_Equivalences} is related to the below stated problem on the structure of $\A{2}$-fibration.

\begin{prob} \label{Prob_A2-Structure}
	Let $R$ be a ring containing $\mathbb{Q}$ and $A$ an $\A{2}$-fibration over $R$. Is then $A$ an $\A{1}$-fibration over $R[V]$ for some $V$ in $A$?
\end{prob}			 

To know the origin of Problem \ref{Prob_A2-Structure}, one may refer to \cite{Asan-Bhatw_Struct-A2-fib} (also see \cite{Dolgachev-Weisfeiler}, \cite{Sat_Pol-two-var-DVR}, \cite{Asanuma_fibre_ring} and \cite{BD_AFNFIB}) . 
While Problem \ref{Prob_A2-Structure} is open in general, it is known that it has negative answer even when $R$ is a two-dimensional regular factorial domain (see Example \ref{Ex_Hochster}). However, the following landmark results give partial affirmative answers to Problem \ref{Prob_A2-Structure}. Sathaye (\cite{Sat_Pol-two-var-DVR}) proved that $A = R^{[2]}$, if $R$ is a DVR. A result of Bass-Connell-Wright (\cite{BCR_Local-Poly}) along with the result of Sathaye show that $A = R^{[2]}$ holds even if $R$ is a PID. Later, Asanuma-Bhatwadekar (\cite{Asan-Bhatw_Struct-A2-fib}) showed that $A$ is an $\mathbb{A}^1$-fibration over $R[W]$ for some $W \in A$, if $R$ is an one-dimensional Noetherian ring. For more related results one may look at \cite{Derksen_cancellation},
\cite{Essen_Around-Cancellation}, \cite{Daigle-Freudenburg_FamiliesAffFib}, \cite{Freudenburg_Rxyz-slice}, \cite{Kahoui_Fixd-pt-fr},
\cite{Kahoui_Cancellation}, \cite{Das_cancel}, and \cite{Kahoui_A2-fib_triviality-criterion}.

\medskip

We now prove Theorem B.
	
	\begin{thm}\label{Thm_FPF-LND_implies_another-LND}
		Let $R$ be a Noetherian domain containing $\mbbQ$ and $A$ an $\A{2}$-fibration over $R$ having a fixed point free $R$-LND. Then, $A$ has another irreducible $R$-LND $D: A \longrightarrow A$ such that $\KerD = R^{[1]}$, and $A$ is an $\A{1}$-fibration over $\KerD$. Further, the following are equivalent.
		\begin{enumerate}
			\item [\rm (I)] $D$ is fixed point free.
			\item [\rm (II)] $A$ is stably polynomial over $R$.
			\item [\rm (III)] $A = R^{[2]}$.
		\end{enumerate}
	\end{thm}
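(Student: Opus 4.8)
The plan is to construct the required LND $D$ explicitly, by feeding Theorem \ref{Thm_A2-fib_Triv_FPF-LND} into a denominator-clearing argument, and then to read off the three equivalences from Theorem \ref{Thm_A2-fib_Triv_FPF-LND} and Theorem \ref{DD_fib}. Let $\delta$ be the given fixed point free $R$-LND of $A$. By Theorem \ref{Thm_A2-fib_Triv_FPF-LND}, $C := \Ker{\delta}$ is an $\A{1}$-fibration over $R$ and $A = C^{[1]}$; fix $W \in A$ with $A = C[W]$, so that canonically $A = C \otimes_R R[W]$. By Lemma \ref{Lem_A=C[W]_implies_W-Residual}, $W$ is a residual variable of $A$ and $A$ is an $\A{1}$-fibration over $R[W] = R^{[1]}$.

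The main step is to produce an $R$-LND of $C$ whose kernel is exactly $R$. Since $R$ is a domain, the generic fibre of the $\A{1}$-fibration $C$ is trivial: $C \otimes_R K = K^{[1]} = K[t]$, and after rescaling we may assume $t \in C$. Clearing denominators of a finite set of $R$-algebra generators of $C$ produces $s \in R \setminus \{0\}$ with $C_s = R_s[t]$. As $C$ is a finitely generated $R$-subalgebra of $R_s[t]$, a sufficiently high power of $s$ carries $\partial_t(C)$ into $C$, so $\delta_0 := (s^N \partial_t)|_C$ is a well-defined $R$-derivation of $C$; it is locally nilpotent, being the restriction to the invariant subring $C$ of the LND $s^N\partial_t$ of $C_s = R_s[t]$ (note $s^N \in \Ker{\partial_t}$), and $\Ker{\delta_0} = C \cap R_s = R$, where the last equality uses $A \cap K = R$, itself a consequence of the embedding $A \hookrightarrow R^{[\ell]}$ of Theorem \ref{Asanuma_struct-fib-th}. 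Replacing $\delta_0$ by its irreducible reduction (which exists because $C$ is Noetherian, and has the same kernel) gives an irreducible $R$-LND $\delta'$ of $C$ with $\Ker{\delta'} = R$.

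Now set $D := \delta' \otimes \mathrm{id}_{R[W]}$ on $A = C \otimes_R R[W]$, i.e. $D\bigl(\sum_j c_j W^j\bigr) = \sum_j \delta'(c_j) W^j$. Then $D$ is an $R$-LND of $A$; since $R[W]$ is $R$-flat, $\Ker{D} = \Ker{\delta'} \otimes_R R[W] = R[W] = R^{[1]}$; a short degree computation in the presentation $A = C[W]$ shows that $D$ inherits irreducibility from $\delta'$; and since $W \notin C = \Ker{\delta}$, the LND $D$ is genuinely different from $\delta$. Finally, $A$ is an $\A{1}$-fibration over $\Ker{D} = R[W]$ by Lemma \ref{Lem_A=C[W]_implies_W-Residual}. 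This settles the first assertion. For the equivalences, (III) $\Rightarrow$ (II) is immediate. For (II) $\Rightarrow$ (III): $W$ is a residual variable of the $\A{2}$-fibration $A$ and $A$ is stably polynomial over $R$, so Theorem \ref{DD_fib} (with $n-m=2-1=1$) gives $A = R[W]^{[1]} = R^{[2]}$. For (I) $\Rightarrow$ (III): if $D$ is fixed point free, applying Theorem \ref{Thm_A2-fib_Triv_FPF-LND} to $D$ yields $A = \Ker{D}^{[1]} = (R^{[1]})^{[1]} = R^{[2]}$. For (III) $\Rightarrow$ (I): from $A = R^{[2]}$ the argument used for (II) $\Rightarrow$ (III) gives $A = R[W][W']$ with $W'$ a variable over $R[W]$; as $R[W] = \Ker{D}$, the LND $D$ is an $R[W]$-derivation of $R[W][W']$, hence $D = D(W')\,\partial_{W'}$ with $0 \neq D(W') \in A$, so $D(A) = D(W')\,A$, and irreducibility of $D$ forces $D(W') \in A^*$; therefore $D(A)A = A$.

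The step I expect to be the crux is the construction of the irreducible $R$-LND $\delta'$ of $C$ with kernel $R$: this is the only place the hypothesis that $R$ is a domain is used — through generic triviality of the $\A{1}$-fibration $C$ — and it is unclear how to manufacture such an LND over an arbitrary Noetherian base. Granting it, the remainder is routine bookkeeping built on Theorem \ref{Thm_A2-fib_Triv_FPF-LND} and Theorem \ref{DD_fib}.
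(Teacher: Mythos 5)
Your proof is correct and follows essentially the same route as the paper: both apply Theorem \ref{Thm_A2-fib_Triv_FPF-LND} to write $A = \Ker{\delta}[W]$, construct the second LND by clearing denominators on the generic fibre so that it annihilates $W$, pass to an irreducible reduction, and derive the three equivalences from Theorem \ref{DD_fib} together with the fixed-point-free criterion for $\A{1}$-fibrations. The only cosmetic difference is in identifying the kernel: you compute $\Ker{D} = R[W]$ directly from $\Ker{\delta'} = C \cap K = R$ and flatness of $R[W]$, whereas the paper deduces it from inertness of $R[W]$ in the $\A{1}$-fibration $A$ over $R[W]$ combined with a transcendence-degree comparison.
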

	\begin{proof} Suppose, $\delta: A \longrightarrow A$ is a fixed point free $R$-LND. Then, by Theorem \ref{Thm_A2-fib_Triv_FPF-LND}, $\Ker{\delta}$ is an $\A{1}$-fibration over $R$ and $A = \Ker{\delta}[V]= \Ker{\delta}^{[1]}$ for some $V \in A$. Since $K$ is the quotient field of $R$, we have $\Ker{\delta} \otimes_R K = K[U_0] = K^{[1]}$ for some $U_0$ in $\Ker{\delta}$, and therefore, $A \otimes_R K = K[V, U_0]$. Since $\Ker{\delta}$ is finitely generated over $R$, there exists $t \in R \backslash \{ 0 \}$ such that $\Ker{\delta}[1/t] = R[1/t][U_0]$, which enables us to choose $\alpha \in \mbbN$ and a $K$-LND $\tilde{D}$ on $A \otimes_R K$ such that $\tilde{D}(V) = 0$, $\tilde{D}(U_0) = t^{\alpha}$, and $\tilde{D}(A) \subseteq A$. So, $D:= \tilde{D}|_A$ is an $R$-LND of $A$ such that $R[V] \subseteq \KerD$. Since $R$ is Noetherian, through proper reduction, we can ensure irreducibility of $D$. Now, since $A = \Ker{\delta}[V]$, by Lemma \ref{Lem_A=C[W]_implies_W-Residual}, $A$ is an $\A{1}$-fibration over $R[V]$. This shows that $R[V]$ is inert in $A$, and hence, it is algebraically closed in $A$. Note that $\KerD$ is also algebraically closed in $A$. Now, since $R[V] \subseteq \KerD$ and $\trdeg{R}{R[V]} = \trdeg{R}{\KerD}$, we have $\KerD$ is algebraic over $R[V]$, and therefore, $\KerD = R[V]$.
		
		\medskip
		We now prove the equivalence of (I), (II) and (III).
		
		\smallskip
		\underline{(I) $\Longleftrightarrow$ (III):} Follows from Proposition \ref{Kahoui_fpf-LND_A1-fib}.
		
		\medskip
		\underline{(III) $\implies$ (II):} Obvious. 
		
		\medskip
		\underline{(II) $\implies$ (I):} Since $A$ an $\A{1}$-fibration over $\KerD = R^{[1]}$, we see that $A \otimes_R k(P)$ is an $\A{1}$-fibration over $\KerD \otimes_R k(P) = k(P)^{[1]}$ for all $P\in \Spec{R}$, and therefore, by Corollary \ref{Cor_A1-fib_Triv_DiffMod_Extended}, we get $A \otimes_R k(P) = (\KerD \otimes_R k(P))^{[1]}$ for all $P \in \Spec{R}$. Since $A$ is stably polynomial over $R$, applying Theorem \ref{DD_fib}, we conclude the implication. 
	\end{proof}

	\begin{rem} \label{Rem_KO-FPF-LND} As a corollary of Theorem \ref{Thm_A2-fib_Triv_FPF-LND} we get Kahoui-Ouali's result on triviality of stably polynomial $\A{2}$-fibration having a fixed point free LND, i.e., Theorem \ref{Kahoui-A2-fib_trivial_criterion}.
	
	\begin{proof} Let $A^{[m]} = R^{[m+2]}$.
		Using a standard reduction technique (see \cite[Lemma 4.3]{Kahoui_A2-fib_triviality-criterion} for the details) we get a finitely generated $\mbbQ$-algebra $R_0$ which is a subring of $R$ and a finitely presented $R_0$ subalgebra $A_0$ of $A$ such that $A_0^{[m]} = R_0^{[m+2]}$, $A_0 \otimes_{R_0} R = A$, $D(A_0) \subseteq A_0$ and $D_0 : = D|_{A_0}$ is a fixed point free $R_0$-LND. Using Theorem \ref{Thm_A2-fib_Triv_FPF-LND} we get $A_0 = \Ker{D_0}^{[1]}$, and therefore, we have ${A_0}^{[m]} = \Ker{D_0}^{[m+1]} = {R_0}^{[m+2]}$, from which, by Theorem \ref{Hamann}, it follows that $\Ker{D_0} = {R_0}^{[1]}$. This shows that we have $A_0 = {R_0}^{[2]}$, and therefore, by the properties of $A_0$ and $R_0$ it follows that $A = R^{[2]}$. Now, on applying Theorem \ref{Essen_FixdPtLND-R^[2]-trivKer}, we conclude that $\KerD = R^{[1]}$ and $A = \KerD^{[1]}$. 
		
		\smallskip
		
		Next, we assume that $R$ is Noetherian and $A$ is a locally stably polynomial algebra over $R$. Since $D$ is a fixed point free $R$-LND of $A$, by Theorem \ref{Thm_A2-fib_Triv_FPF-LND}, we have $\KerD$ is an $\A{1}$-fibration over $R$ and $A = \KerD^{[1]}$. Since $A$ is locally stably polynomial over $R$ and $A = \KerD^{[1]}$, by Theorem \ref{Hamann}, we see that $\KerD$ is a locally polynomial algebra over $R$, and therefore, by Theorem \ref{BCW_Sym}, we get $\KerD = \Sym{R}{N}$ for some rank one projective $R$-module $N$. 
	\end{proof}
\end{rem}

	\begin{rem} \label{Rem_Cor-Th-Combined}
		From Corollary \ref{Cor_Structure_A2-Fib_Equivalences} and the proof of Theorem \ref{Thm_FPF-LND_implies_another-LND} we note the following.
		\begin{enumerate}
			\item [\rm A.] Let $R$ be a Noetherian domain containing $\mbbQ$, $A$ an $\A{2}$-fibration over $R$ and $\delta: A \longrightarrow A$ a fixed point free $R$-LND. Then, there exists $V \in A$ such that 
			\begin{enumerate}
				\item [\rm (I)] $\Ker{\delta}$ is an $\A{1}$-fibration over $R$ and $A = \Ker{\delta}[V] = \Ker{\delta}^{[1]}$. Further, $A$ is stably polynomial algebra over $R$ if and only if $\Ker{\delta} = R^{[1]}$, i.e, $A = R^{[2]}$.
				
				\item [\rm (II)] $A$ has another irreducible $R$-LND $D$, not necessarily fixed point free, such that $\Ker{D} = R[V] = R^{[1]}$ and $A$ is an $\A{1}$-fibration over $R[V]$. Further, $A = R[V]^{[1]}$ if and only if $D$ is fixed point free.
			\end{enumerate}
		
		\item [\rm B.] Let $R$ be a Noetherian domain containing $\mbbQ$, $A$ an $\A{2}$-fibration over $R$. Then $A = R^{[2]}$ if and only if there exist a tuple of $R$-LNDs $(D_1, D_2)$ of $A$ with slices and an element $V$ in $A$ such that $D_1(V) = 1$ and $D_2(V)=0$.
	\end{enumerate}

\end{rem}

From our discussions we observe the following criterion for an $\A{3}$-fibration over a Noetherian ring containing $\mathbb{Q}$ to be trivial.

\begin{cor} \label{Cor_A3-fib}
Let $R$ be a Noetherian ring containing $\mbbQ$, $A$ an $\A{3}$-fibration over $R$. Suppose that there exist a tuple of $R$-LNDs $(D_1, D_2)$ of $A$ with slices and an element $V$ in $A$ such that $D_1(V) = 1$ and $D_2(V)=0$. Then, $A = C^{[2]}$ for some $\A{1}$-fibration $C$ over $R$. Further, if $A$ has another $R$-LND $D_3$ with slice and an element $W \in A$ such that $D_2(W) =1 $ and $D_3(V) = D_3(W) =0$, then $A = R^{[3]}$.
\end{cor}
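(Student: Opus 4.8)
The plan is to mimic, one dimension higher, the argument that produced the equivalence ``(V) $\implies$ (II)'' in Corollary \ref{Cor_Structure_A2-Fib_Equivalences} together with the descent-of-triviality step of Theorem \ref{Thm_A2-fib_Triv_FPF-LND}. First I would use the slices of $D_1$ and $D_2$ to decompose $A$. Since $D_1$ has a slice $s_1$, the slice theorem gives $A = \Ker{D_1}^{[1]} = \Ker{D_1}[s_1]$. Now I want to pass from $\Ker{D_1}$ to the smaller subring $C := \Ker{D_1}\cap\Ker{D_2}$. Because $D_2(V)=0$ and $D_1(V)=1$, one checks that $V\notin\Ker{D_1}$ while $V$ is a candidate generator for a residual variable lying in $\Ker{D_2}$; the point is that $D_2$ restricts to a locally nilpotent derivation of $\Ker{D_1}$ (as $D_1,D_2$ ``commute enough'' after replacing $D_2$ by $D_2 - D_2(s_1)D_1$, which has $\Ker{D_1}$ as an invariant subring and still kills $V$), and this restricted derivation has a slice inside $\Ker{D_1}$ coming from a suitable element. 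Applying the slice theorem again inside $\Ker{D_1}$ yields $\Ker{D_1} = C^{[1]}$, whence $A = C^{[2]}$. To see that $C$ is an $\A{1}$-fibration over $R$: $A$ is an $\A{3}$-fibration, $A = C^{[2]}$, so for each $P\in\Spec R$ we get $k(P)^{[3]} = A\otimes_R k(P) = (C\otimes_R k(P))^{[2]}$, and Theorem \ref{Hamann} forces $C\otimes_R k(P) = k(P)^{[1]}$; that $C$ is finitely generated and flat over $R$ follows because $C$ is a retract of the finitely generated flat $R$-algebra $A$ (it is a direct summand as an $R$-module, being the kernel of the iterated construction). This gives the first assertion.

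For the second assertion I would add the hypotheses on $D_3$ and $W$ and aim to trivialize $C$. The element $V$ lies in $C$ only after we arrange $V \in \Ker{D_1}\cap\Ker{D_2}$; here $D_1(V)=1$, so $V\notin C$ — rather, $V$ together with $W$ should be used to produce a full system of coordinates. The cleaner route: the three derivations $D_1, D_2, D_3$ with the incidence relations $D_1(V)=1$, $D_2(V)=0$, $D_2(W)=1$, $D_3(V)=D_3(W)=0$ are exactly the data that, by Remark \ref{Rem_Cor-Th-Combined}(B) applied in a relative setting, should let us conclude $A = R^{[3]}$. Concretely, I would first localize at each $P$: over $k(P)$, $A\otimes k(P) = k(P)^{[3]}$ by the first part combined with the triviality criterion, so $A$ is ``locally trivial'' in the fibre sense — but that is automatic for a fibration and not yet enough. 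Instead, I would show $A$ is stably polynomial over $R$ and invoke the mechanism of Theorem \ref{DD_fib}: using $D_1$ with slice $s_1$, $V$ is a residual variable candidate after the reductions above; then $W$ is a residual variable of the $\A{1}$-fibration $A$ over a two-variable polynomial subring; and $D_3$ kills both $V$ and $W$, which identifies $R[V,W]$ with $\Ker{D_3}$-type data and forces $C = R^{[1]}$. Once $C = R^{[1]}$ we get $A = C^{[2]} = R^{[3]}$ directly.

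More carefully, for $C = R^{[1]}$ I expect to argue as follows. From $A = C^{[2]}$ and the existence of the LND $D_3$ with slice and with $D_3(V) = D_3(W) = 0$, one has $A = \Ker{D_3}^{[1]}$, and $R[V,W]\subseteq \Ker{D_3}$ with $V,W$ algebraically independent over $R$ (independence because $D_1(V)=1$ handles $V$, and $D_2$ applied after the coordinate adjustments handles $W$ modulo $R[V]$). Since $\Ker{D_3}$ is algebraically closed in $A$ and has transcendence degree $2$ over $R$, as in the proof of Theorem \ref{Thm_FPF-LND_implies_another-LND} we obtain $\Ker{D_3} = $ the algebraic closure of $R[V,W]$ in $A$; provided $R[V,W]$ is itself inert (which follows once we know $A$ is an $\A{1}$-fibration over $R[V,W]$ — this is the analogue of Lemma \ref{Lem_A=C[W]_implies_W-Residual} and needs $(V,W)$ to be a $2$-tuple residual variable), we get $\Ker{D_3} = R[V,W] = R^{[2]}$, hence $A = R[V,W]^{[1]} = R^{[3]}$.

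The main obstacle will be the bookkeeping that makes $V$ (and then $W$) into genuine residual variables: the raw hypotheses only say $D_1(V)=1$ etc., and one must replace $D_2, D_3$ by suitable $R[V]$-linear or $R[V,W]$-linear combinations (e.g. $D_2 \mapsto D_2 - D_2(s_1)D_1$) so that the kernels nest correctly and the slice theorem can be applied in stages; verifying that these adjusted derivations remain locally nilpotent, still annihilate the intended coordinates, and still possess slices inside the appropriate subrings — and that the resulting tower $R\subseteq R[V]\subseteq R[V,W]\subseteq A$ consists of residual-variable extensions so that Theorem \ref{DD_fib} and Lemma \ref{Lem_A=C[W]_implies_W-Residual} apply — is where the real work lies. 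Once that tower is in place, Theorem \ref{Hamann}, Theorem \ref{DD_fib}, and the argument of Theorem \ref{Thm_FPF-LND_implies_another-LND} close the proof routinely.
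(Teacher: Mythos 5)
Your overall skeleton (split off one variable with $D_1$, then a second with $D_2$, then trivialize with $D_3$) matches the paper's, but the mechanism you propose for the crucial second splitting has a genuine gap. You take an arbitrary slice $s_1$ of $D_1$, write $A=\Ker{D_1}[s_1]$, and then try to make $D_2$ act on $\Ker{D_1}$ by replacing it with $D_2-D_2(s_1)D_1$. This fails on two counts: an $A$-linear combination of locally nilpotent derivations need not be locally nilpotent, and the modified derivation need not stabilize $\Ker{D_1}$ at all --- on $\Ker{D_1}$ it agrees with $D_2$, and there is no reason $D_2(\Ker{D_1})\subseteq\Ker{D_1}$. The hypotheses $D_1(V)=1$, $D_2(V)=0$ are there precisely to avoid this: $V$ itself is a slice of $D_1$, so $A=\Ker{D_1}[V]$, and since $D_2(V)=0$ we have $D_2(VA)\subseteq VA$, so $D_2$ \emph{descends to the quotient} $A/VA\cong\Ker{D_1}$ as a genuine $R$-LND whose slice is the image of the slice of $D_2$. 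No commutation or modification of $D_2$ is needed. The same quotient trick (with $W$ a slice of $D_2$ and $D_3$ descending to $A/WA\cong\Ker{D_2}$ as an $R[V]$-LND with slice) handles the second assertion cleanly via Proposition \ref{Kahoui_fpf-LND_A1-fib}, once one knows $\Ker{D_2}$ is an $\A{1}$-fibration over $R[V]$ (Lemma \ref{Lem_A=C[W]_implies_W-Residual}, using that $R[V]\subseteq\Ker{D_2}$).

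Two further problems: your fallback argument for the second part uses transcendence degree, inertness and algebraic closure of kernels, which presuppose that $R$ is a domain; the corollary only assumes $R$ Noetherian, so that route is unavailable. And you never address why $\Ker{D_1}$ is an $\A{2}$-fibration over $R$: from $(\Ker{D_1}\otimes_Rk(P))^{[1]}=k(P)^{[3]}$ one needs to cancel down to $k(P)^{[2]}$, which is \emph{not} Theorem \ref{Hamann} (that theorem only concludes $=k(P)^{[1]}$) but the Zariski cancellation theorem in dimension three over fields containing $\mbbQ$, an external input the paper cites explicitly. This step is needed both to apply Corollary \ref{Cor_Structure_A2-Fib_Equivalences} to $\Ker{D_1}$ and to see that $V$ is a residual variable of $A$, which drives the second half of the proof.
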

\begin{proof}
Since $D_1(V) =1$, we have $A = \Ker{D_1}[V] = \Ker{D_1}^{[1]}$. As the Zariski's cancellation problem has affirmative answer in dimension three over fields containing $\mbbQ$ (follows from \cite{Miyanishi_cyllinder}, \cite{Fujita_Zariski-Problem}, and \cite{K_K2Form}), from the proof of Lemma \ref{Lem_A=C[W]_implies_W-Residual} we see that $\Ker{D_1}$ is finitely generated and flat over $R$ and $\Ker{D_1} \otimes_R k(P) = k(P)^{[2]}$ for all $P \in \Spec{R}$, i.e., $\Ker{D_1}$ is an $\A{2}$-fibration over $R$. Now, since $D_2$ is an $R$-LND of $A = \Ker{D_1} [V]$ with slice satisfying $D_2(V) =0$, it induces an $R$-LND $\ol{D}_2$ of $A/VA = \Ker{D_1}$ having a slice, and hence, by Corollary \ref{Cor_Structure_A2-Fib_Equivalences}, we have $\Ker{D_1} = C^{[1]}$, i.e., $A = C^{[2]}$ where $C$ is an $\A{1}$-fibration over $R$.

\medskip

We now assume that $A$ has another $R$-LND $D_3$ with slice and an element $W \in A$ such that $D_2(W) =1 $ and $D_3(V) = D_3(W) =0$. Since $A = \Ker{D_1}[V]$ and $\Ker{D_1}$ is an $\A{2}$-fibration over $R$, we see that $V$ is a residual variable of $A$, and therefore, by Theorem \ref{DD_fib}, $A$ is an $\A{2}$-fibration over $R[V]$. Again, since $R[V] \subset \Ker{D_2} \subset \Ker{D_2}[W] = A$, from  Lemma \ref{Lem_A=C[W]_implies_W-Residual} it follows that $\Ker{D_2}$ is an $\A{1}$-fibration over $R[V]$. Now, note that as $V, W \in \Ker{D_3}$ we can see $D_3$ as an $R[V]$-LND of $A = \Ker{D_2}[W]$ with a slice, and therefore, the corresponding $R[V]$-LND of $A/WA = \Ker{D_2}$ also has a slice. This shows that $\Ker{D_2} = R[V]^{[1]}$, i.e., $A = R[V,W]^{[1]} = R^{[3]}$.
\end{proof}

\section{Examples} \label{Sec_Examples} We conclude the article by quoting three examples. The first one establishes the necessity of the condition ``$A$ is stably polynomial over $R$'' in Theorem \ref{Kahoui-A2-fib_trivial_criterion}.

\begin{ex} \label{Ex_Yanik}
Let $B$ be a non-trivial $\A{1}$-fibration over a Noetherian domain $R$ containing $\mathbb{Q}$ (may refer to \cite[Example 1]{Yanik}). Set $A : = B[X] = B^{[1]}$. It is easy to see that $A$ is a non-trivial $\A{2}$-fibration over $R$. Let $B \otimes_R K = K[Y]$ for some $Y \in B$, and therefore, $A \otimes_R K = K[X,Y]$. Let $D$ be the restriction of the partial derivative $\partial_X: A \otimes_R K \longrightarrow A \otimes_R K$ on $A$, i.e., $D = {\partial_X}|_A$. It is easy to see that $D$ is an $R$-LND of $A$ and $D(X) =1$. However, from Theorem \ref{Hamann}, it follows that $A$ is not a stably polynomial algebra over $R$.
\end{ex}	

In view of Corollary \ref{Cor_Structure_A2-Fib_Equivalences} we quote below a class of examples of $\A{2}$-fibrations which are stably polynomial and can not be written as $\A{1}$-fibrations over a polynomial algebras, and therefore, by Theorem \ref{Kahoui-A2-fib_trivial_criterion}, do not possess fixed point free LNDs.
\begin{ex} \label{Ex_Hochster}
Let $A$ be a non-trivial $\A{2}$-fibration over a Noetherian domain $R$ containing $\mathbb{Q}$ such that $A$ is stably polynomial over $R$, e.g., the examples of Raynaud in \cite{Raynaud} (also see \cite{Suslin_Mennicke-symbols}) and Hochster in \cite{Hochster_nonunique-coeff} (for both the examples one may also refer to \cite{Essen-Rossum_counterexamlpes}, \cite{Freudenburg_Rxyz-slice} and \cite[p. 272 and p. 282]{Freudenburg-BookNew}). If possible, let $V \in A$ be such that $A$ is an $\A{1}$-fibration over $R[V]$. By Theorem \ref{Asanuma_struct-fib-th} we see that $A$ is an $R[V]$ subalgebra of $R[V]^{[n]}$ for some $n \in \mathbb{N}$, and therefore, by \cite[Theorem 57, p. 186]{Matsumura_Algebra}, we get $\Omega_R(A) = \Omega_{R[V]}(A) \ \oplus \ \Omega_R(R[V]) \otimes_{R[V]} A = \Omega_{R[V]}(A) \ \oplus \ A$. Since $A$ is stably polynomial over $R$, it can be seen that $\Omega_R(A)$ is a stably free $A$-module (see \cite[Lemma 2.1]{DD_residual}), and therefore, $\Omega_{R[V]}(A)$ is a stably free $A$-module. Now, from Theorem \ref{Asanuma_struct-fib-th} it directly follows that $A$ is a stably polynomial algebra over $R[V]$, and hence, by Theorem \ref{Hamann}, $A = R[V]^{[1]}$ which is a contradiction. This proves that there does not exist $V \in A$ such that $A$ is an $\A{1}$-fibration over $R[V]$.
\end{ex}	

Lastly, again in view of Corollary \ref{Cor_Structure_A2-Fib_Equivalences}, we discuss an example of a non-trivial $\A{2}$-fibration by Asanuma-Bhatwadekar (\cite[Example 3.12]{Asan-Bhatw_Struct-A2-fib}), which is a non-stably polynomial algebra having no fixed point free LND, and can be written as an $\A{1}$-fibration over a polynomial algebra.
	\begin{ex} \label{Ex_AB}
		Set $T := \mbbC[X] = \mbbC^{[1]}$ and $R := \mbbC[X^2, X^3]$. Let $T[V,W] = T^{[2]}$ and $A = R[V, W+ XV^2 W^2] + X^2T[V,W]$. One may check that $A$ is a non-stably polynomial $\A{2}$-fibration over $R$ and $V$ is a residual variable of $A$ over $R$; i.e., $A$ is a non-trivial $\A{1}$-fibration over $R[V]$.
		
		\medskip
		Asanuma-Bhatwadekar, in \cite{Asan-Bhatw_Struct-A2-fib}, established that $A$ can not be written as $A_1 \otimes_R A_2$ where $A_1, A_2$ are $\A{1}$-fibrations over $R$; and therefore, from ``(I) $\Longleftrightarrow$ (III)'' of Corollary \ref{Cor_Structure_A2-Fib_Equivalences}, we see that $A$ does not possess any fixed point free $R$-LND. However, it is easy to see that $A$ has a non-fixed point free $R$-LND with kernel $R[V]$.
	\end{ex}

	\section*{Acknowledgment} The authors thank Neena Gupta for helpful discussions and many fruitful suggestions, and thank Amartya K. Dutta for helping to identify a gap in an earlier draft of this article. The first author sincerely thanks S.M. Bhatwadekar for motivating discussions on a few related topics at CAAG-2017, IISER Pune.

\bibliographystyle{apalike}
\normalem
	\bibliography{reference}

\begin{thebibliography}{}

\bibitem[Asanuma, 1987]{Asanuma_fibre_ring}
Asanuma, T. (1987).
\newblock Polynomial fibre rings of algebras over {N}oetherian rings.
\newblock {\em Invent. Math.}, 87(1):101--127.

\bibitem[Asanuma and Bhatwadekar, 1997]{Asan-Bhatw_Struct-A2-fib}
Asanuma, T. and Bhatwadekar, S.~M. (1997).
\newblock Structure of {${\bf A}\sp 2$}-fibrations over one-dimensional
  {N}oetherian domains.
\newblock {\em J. Pure Appl. Algebra}, 115(1):1--13.

\bibitem[Bass et~al., 7677]{BCR_Local-Poly}
Bass, H., Connell, E.~H., and Wright, D.~L. (1976/77).
\newblock Locally polynomial algebras are symmetric algebras.
\newblock {\em Invent. Math.}, 38(3):279--299.

\bibitem[{Berson} et~al., 2001]{Essen-Maubach-Berson_Der-div-zero}
{Berson}, J., {van den Essen}, A., and {Maubach}, S. (2001).
\newblock {Derivations having divergence zero on $R[X,Y]$.}
\newblock {\em {Isr. J. Math.}}, 124:115--124.

\bibitem[Bhatwadekar and Dutta, 1994]{BD_AFNFIB}
Bhatwadekar, S.~M. and Dutta, A.~K. (1994).
\newblock On affine fibrations.
\newblock In {\em Commutative algebra (Trieste, 1992)}, pages 1--17. World Sci.
  Publ., River Edge, NJ.

\bibitem[Bhatwadekar and Dutta, 1997]{BD_LND}
Bhatwadekar, S.~M. and Dutta, A.~K. (1997).
\newblock Kernel of locally nilpotent {$R$}-derivations of {$R[X,Y]$}.
\newblock {\em Trans. Amer. Math. Soc.}, 349(8):3303--3319.

\bibitem[Daigle and Freudenburg, 1998]{Daigle-Freudenburg_UFD-LND-Rank-2}
Daigle, D. and Freudenburg, G. (1998).
\newblock Locally nilpotent derivations over a \text{UFD} and an application to
  rank two locally nilpotent derivations of $k[x_1, \cdots , x_n]$.
\newblock {\em J. Algebra}, 204(2):353--371.

\bibitem[Daigle and Freudenburg, 2010]{Daigle-Freudenburg_FamiliesAffFib}
Daigle, D. and Freudenburg, G. (2010).
\newblock Families of affine fibrations.
\newblock In {\em Symmetry and spaces}, volume 278 of {\em Progr. Math.}, pages
  35--43. Birkh\"{a}user Boston, Boston, MA.

\bibitem[Das, 2015]{Das_cancel}
Das, P. (2015).
\newblock On cancellation of variables of the form $bt^n-a$ over affine normal
  domains.
\newblock {\em J. Pure Appl. Algebra}, 219(12):5280--5288.

\bibitem[Das and Dutta, 2014]{DD_residual}
Das, P. and Dutta, A.~K. (2014).
\newblock A note on residual variables of an affine fibration.
\newblock {\em J. Pure Appl. Algebra}, 218(10):1792--1799.

\bibitem[Derksen et~al., 2002]{Derksen_cancellation}
Derksen, H., van~den Essen, A., and van Rossum, P. (2002).
\newblock An extension of the miyanishi–sugie cancellation theorem to
  dedekind rings.
\newblock {\em technical report, University of Nijmegen}.

\bibitem[El~Kahoui and {Ouali}, 2012]{Kahoui_Fixd-pt-fr}
El~Kahoui, M. and {Ouali}, M. (2012).
\newblock {Fixed point free locally nilpotent derivations of $\mathbb
  A^2$-fibrations.}
\newblock {\em {J. Algebra}}, 372:480--487.

\bibitem[El~Kahoui and Ouali, 2014]{Kahoui_Cancellation}
El~Kahoui, M. and Ouali, M. (2014).
\newblock The cancellation problem over {N}oetherian one-dimensional domains.
\newblock {\em Kyoto J. Math.}, 54(1):157--165.

\bibitem[El~Kahoui and {Ouali}, 2016]{Kahoui_A2-fib_triviality-criterion}
El~Kahoui, M. and {Ouali}, M. (2016).
\newblock {A triviality criterion for $\mathbb{A}^2$-fibrations over a ring
  containing $\mathbb{Q}$.}
\newblock {\em {J. Algebra}}, 459:272--279.

\bibitem[Freudenburg, 2009]{Freudenburg_Rxyz-slice}
Freudenburg, G. (2009).
\newblock Derivations of {$R[X,Y,Z]$} with a slice.
\newblock {\em J. Algebra}, 322(9):3078--3087.

\bibitem[Freudenburg, 2017]{Freudenburg-BookNew}
Freudenburg, G. (2017).
\newblock {\em Algebraic theory of locally nilpotent derivations}, volume 136
  of {\em Encyclopaedia of Mathematical Sciences}.
\newblock Springer-Verlag, Berlin, second edition.
\newblock Invariant Theory and Algebraic Transformation Groups, VII.

\bibitem[Fujita, 1979]{Fujita_Zariski-Problem}
Fujita, T. (1979).
\newblock On {Z}ariski problem.
\newblock {\em Proc. Japan Acad. Ser. A Math. Sci.}, 55(3):106--110.

\bibitem[Hamann, 1975]{Haman_Invariance}
Hamann, E. (1975).
\newblock On the {$R$}-invariance of {$R[X]$}.
\newblock {\em J. Algebra}, 35:1--16.

\bibitem[{Hochster}, 1972]{Hochster_nonunique-coeff}
{Hochster}, M. (1972).
\newblock {Nonuniqueness of coefficient rings in a polynomial ring.}
\newblock {\em {Proc. Am. Math. Soc.}}, 34:81--82.

\bibitem[Kambayashi, 1975]{K_K2Form}
Kambayashi, T. (1975).
\newblock On the absence of nontrivial separable forms of the affine plane.
\newblock {\em J. Algebra}, 35:449--456.

\bibitem[Matsumura, 1980]{Matsumura_Algebra}
Matsumura, H. (1980).
\newblock {\em Commutative algebra}, volume~56 of {\em Mathematics Lecture Note
  Series}.
\newblock Benjamin/Cummings Publishing Co., Inc., Reading, Mass., second
  edition.

\bibitem[Miyanishi and Sugie, 1980]{Miyanishi_cyllinder}
Miyanishi, M. and Sugie, T. (1980).
\newblock Affine surfaces containing cylinderlike open sets.
\newblock {\em J. Math. Kyoto Univ.}, 20(1):11--42.

\bibitem[Raynaud, 1968]{Raynaud}
Raynaud, M. (1968).
\newblock Modules projectifs universels.
\newblock {\em Invent. Math.}, 6:1--26.

\bibitem[Rentschler, 1968]{Rentchler_Operations-du-Groupe}
Rentschler, R. (1968).
\newblock Op\'{e}rations du groupe additif sur le plan affine.
\newblock {\em C. R. Acad. Sci. Paris S\'{e}r. A-B}, 267:A384--A387.

\bibitem[Sathaye, 1983]{Sat_Pol-two-var-DVR}
Sathaye, A. (1983).
\newblock Polynomial ring in two variables over a {DVR}: a criterion.
\newblock {\em Invent. Math.}, 74(1):159--168.

\bibitem[Suslin, 1982]{Suslin_Mennicke-symbols}
Suslin, A.~A. (1982).
\newblock Mennicke symbols and their applications in the {$K$}-theory of
  fields.
\newblock In {\em Algebraic {$K$}-theory, {P}art {I} ({O}berwolfach, 1980)},
  volume 966 of {\em Lecture Notes in Math.}, pages 334--356. Springer,
  Berlin-New York.

\bibitem[Swan, 1980]{Swan_On-Sem}
Swan, R.~G. (1980).
\newblock On seminormality.
\newblock {\em J. Algebra}, 67(1):210--229.

\bibitem[van~den Essen, 2007]{Essen_Around-Cancellation}
van~den Essen, A. (2007).
\newblock Around the cancellation problem.
\newblock In {\em Affine algebraic geometry}, pages 463--481. Osaka Univ.
  Press, Osaka.

\bibitem[van~den Essen and van Rossum, 2001]{Essen-Rossum_counterexamlpes}
van~den Essen, A. and van Rossum, P. (2001).
\newblock A class of counterexamples to the cancellation problem for arbitrary
  rings.
\newblock {\em Ann. Polon. Math.}, 76(1-2):89--93.
\newblock Polynomial automorphisms and related topics (Krak\'{o}w, 1999).

\bibitem[Ve\u{\i}sfe\u{\i}ler and Dolga\v{c}ev, 1974]{Dolgachev-Weisfeiler}
Ve\u{\i}sfe\u{\i}ler, B.~J. and Dolga\v{c}ev, I.~V. (1974).
\newblock Unipotent group schemes over integral rings.
\newblock {\em Izv. Akad. Nauk SSSR Ser. Mat.}, 38:757--799.

\bibitem[Wright, 1981]{Wright_On-The-Jacobian-Conjecture}
Wright, D. (1981).
\newblock On the {J}acobian conjecture.
\newblock {\em Illinois J. Math.}, 25(3):423--440.

\bibitem[Yanik, 1981]{Yanik}
Yanik, J. (1981).
\newblock Projective algebras.
\newblock {\em J. Pure Appl. Algebra}, 21(3):339--358.

\end{thebibliography}
\end{document}